\documentclass{article}
\usepackage[utf8]{inputenc}
\usepackage[]{graphicx}
\usepackage[]{amsmath,amssymb}
\usepackage[]{amsthm,enumerate}
\usepackage{verbatim,bm,comment}
\usepackage{color}
\usepackage{comment}

\newtheorem{theorem}{Theorem}[section]
\newtheorem{lemma}[theorem]{Lemma}
\newtheorem{proposition}[theorem]{Proposition}

\theoremstyle{definition}

\newtheorem{example}[theorem]{Example}

\theoremstyle{remark}
\newtheorem{remark}[theorem]{Remark}

\newtheorem{problem}[theorem]{Problem}

\DeclareMathOperator{\rank}{rank}

\title{
On the Smith normal form of a skew-symmetric D-optimal design of order $n\equiv 2\pmod{4}$}
\author{ Gary Greaves\thanks{G.G. was supported by  the Singapore
Ministry of Education Academic Research Fund (Tier 1);  grant number:  RG127/16.}\\
  School of Physical and Mathematical Sciences, \\
  Nanyang Technological University, \\
   21 Nanyang Link, Singapore 637371\\
  {\tt grwgrvs@gmail.com}
\and
 Sho Suda\thanks{S.S. was supported by JSPS KAKENHI; grant number: 15K21075. } \\
 Department of Mathematics Education, \\  Aichi University of Education, \\ Kariya, 448-8542, Japan \\
 \tt{suda@auecc.aichi-edu.ac.jp}
}
%
\begin{document}
\maketitle
\begin{abstract}
We show that the Smith normal form of a skew-symmetric D-optimal design of order $n\equiv 2\pmod{4}$
is determined by its order.
Furthermore, we show that the Smith normal form of such a design can be written explicitly in terms of the order $n$, thereby proving a recent conjecture of Armario.
We apply our result to show that certain D-optimal designs of order $n\equiv 2\pmod{4}$ 
are not equivalent to any skew-symmetric D-optimal design. 
We also provide a correction to a result in the literature on the Smith normal form of D-optimal designs.
\end{abstract}

\section{Introduction} 
\label{sec:introduction}

The classification of 
$\{1,-1\}$-matrices of order $n$ having largest possible determinants 
is one of the most important problems in design theory.
Such matrices are known as \textbf{D-optimal designs}.
D-optimal designs of order $n \equiv 0 \pmod 4$ and $n \equiv 2 \pmod 4$ are, respectively, called Hadamard matrices and EW matrices.
For such matrices, the \textit{Smith normal form} is a useful invariant that is used to distinguish between those of the same order.
The Smith normal form of a skew-symmetric Hadamard matrix is well-known~\cite{MW,HK}.
In this paper we determine the Smith normal form of skew-symmetric EW matrices.

Two $n\times n$ integer matrices $M$ and $N$ are called \textbf{equivalent} if there exist $n\times n$ unimodular integer matrices $P$ and $Q$ such that $N=PMQ$. 

Let $M$ be an $n\times n$ integer matrix of rank $r$.  
Then $M$ is equivalent to a diagonal integer matrix  
\[
		\operatorname{diag}[m_1,m_2,\ldots,m_r,m_{r+1},\ldots,m_n], 
	\]
satisfying $m_i \;|\; m_{i+1}$ for all $i \in \{1,\dots, n-1\}$ 
 and $m_{r+1}=0$.
This diagonal matrix is called the \textbf{Smith normal form} of $M$ and the values $m_1,\ldots,m_n$ are called the \textbf{invariant factors} of $A$.

Throughout, $I_n$, $J_n$, and $O_n$ will (respectively) denote the $n \times n$ identity matrix, all-ones matrix, and all-zeros matrix.
We omit the subscript when the order is apparent and the matrices denoted by $J$ and $O$ may not necessarily be square matrices.
We use $\mathbf{0}_t$ and $\mathbf{1}_t$ to denote the length-$t$ all-zeros and all-ones (column) vectors respectively and we omit the subscript when the length is apparent.

Let $X$ be a $\{1,-1\}$-matrix of order $n$.
Abusing language, we call $X$ \textbf{skew-symmetric} if $X+X^\top = 2I$, i.e., the matrix $X-I$ is skew-symmetric in the usual sense:  $X-I + (X-I)^\top = O$.


A famous inequality due to Hadamard~\cite{Hadamard1893} is the following:
 \begin{equation*}
 	\label{ineq:Hadamard}
 	|\det(X)| \leqslant n^{n/2}.
 \end{equation*}
 If equality holds then $X$ is called a \textbf{Hadamard matrix}.
Hadamard's inequality can be improved if we restrict to matrices whose orders are not divisible by $4$.
Indeed, Ehlich~\cite{Ehlich1964} and Wojtas~\cite{Wojtas1964} independently showed that for a $\{1,-1\}$-matrix $X$ of order $n \equiv 2\pmod{4}$, Hadamard's inequality can be strengthened to
\begin{align}\label{ineq:EW}
|\det(X)| \leqslant 2(n-1)(n-2)^{(n-2)/2}.
\end{align}
Moreover, there exists a $\{1,-1\}$-matrix achieving equality in \eqref{ineq:EW} if and only if there exists a $\{1,-1\}$-matrix $B$ such that
\begin{align}\label{eq:ew}
BB^\top=B^\top B=
\begin{pmatrix}
(n-2)I+2J & O_{n/2} \\
O_{n/2} & (n-2)I+2J
\end{pmatrix}.
\end{align}
A $\{1,-1\}$-matrix  $B$ of order $n$ is called an \textbf{EW matrix} if it satisfies \eqref{eq:ew}.
Hence, in particular, an EW matrix of order $n$ has determinant $\pm 2(n-1)(n-2)^{(n-2)/2}$.
Working up to equivalence, we may assume that an EW matrix has a positive determinant.
Accordingly, throughout this paper, we assume that an EW matrix $S$ of order $4t+2$ has determinant $\det(S) = (4t)^{2t}(8t+2)$.
%
We refer to the \emph{Handbook of Combinatorial Designs}~\cite{CRCHAndbook2007} for background on Hadamard and EW matrices. 

The Smith normal form of skew-symmetric Hadamard matrices of order $4t$ is uniquely determined \cite{MW,HK}.
\begin{theorem}
A skew-symmetric Hadamard matrix of order $4t$ has the Smith normal form 
 	\[
		\operatorname{diag}[1,\underbrace{2,\dots,2}_{2t-1},\underbrace{2t,\dots,2t}_{2t-1},4t].
	\]
\end{theorem}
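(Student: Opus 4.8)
The plan is to prove the Smith normal form formula for a skew-symmetric Hadamard matrix $H$ of order $4t$ by exploiting the defining relations together with structural constraints coming from skew-symmetry. The starting observation is that $HH^\top = 4tI$, and since $H$ is skew-symmetric in the sense $H + H^\top = 2I$, we have $H^\top = 2I - H$, so the relation becomes $H(2I-H) = 4tI$, i.e. $H^2 = 2H - 4tI$. This single polynomial identity will drive the whole argument: working over the integers, it tells us how products of $H$ behave and, reduced modulo various primes $p$, it controls the ranks $\operatorname{rank}_p(H)$ and the ranks of $H$ modulo prime powers, which is exactly what determines the Smith normal form.

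First I would pin down $\det(H) = (4t)^{2t}$ up to sign (this follows from $HH^\top = 4tI$), so the product of all invariant factors is $\pm(4t)^{2t}$; I may normalize the sign. Next I would determine the invariant factors one prime at a time, since the Smith normal form is obtained by combining the $p$-adic elementary divisors. For an odd prime $p \mid 4t$ (equivalently $p \mid t$, as $4t \equiv 0$ only brings the prime $2$ from the factor $4$), I would reduce $H^2 = 2H - 4tI$ modulo $p$ to get $\bar H^2 = 2\bar H$, i.e. $\bar H(\bar H - 2I) = 0$ over $\mathbb{F}_p$; since $2$ is invertible mod $p$ this says $\bar H$ is (a scalar multiple of) an idempotent, so its minimal polynomial has distinct roots and $\bar H$ is diagonalizable with eigenvalues $0$ and $2$. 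The multiplicity of the eigenvalue $0$, i.e. the $p$-rank of $H$, can be computed from $\operatorname{trace}(\bar H) = n = 4t \equiv 0 \pmod p$ combined with the rank constraint, yielding $\operatorname{rank}_p(H) = 2t$ and hence exactly $2t$ invariant factors divisible by $p$. The prime $p = 2$ must be handled separately and more carefully: modulo $2$ the matrix $\bar H$ has all diagonal entries equal to $1$ (from $H + H^\top = 2I$, off-diagonal entries come in $\pm$ pairs so $\bar H$ is symmetric mod $2$ with $1$'s on the diagonal), and I would analyze $H \pmod 4$ and $\pmod 8$ to separate the invariant factors into the blocks $1$, the $2$'s, the $2t$'s, and the final $4t$.

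The cleanest route to the exact shape $\operatorname{diag}[1, 2^{(2t-1)}, (2t)^{(2t-1)}, 4t]$ is to show the invariant factors pair up multiplicatively: because $H$ is (up to the shift by $I$) a skew-symmetric integer matrix, and more usefully because $H^\top = 2I - H$ together with $HH^\top = 4tI$ forces a duality $m_i \, m_{n+1-i} = 4t$ on the nonzero invariant factors (this kind of complementary-pairing is standard for matrices satisfying $MM^\top = cI$). Granting $m_i m_{n+1-i} = 4t$, the list is determined by its bottom half: I would establish $m_1 = 1$ (the matrix has a $\pm1$ entry, so the gcd of all entries is $1$), then $m_2 = \dots = m_{2t} = 2$, which by the pairing gives $m_{2t+1} = \dots = m_{4t-1} = 2t$ and $m_{4t} = 4t$. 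The substantive content is therefore the claim that the second through $2t$-th invariant factors all equal $2$, equivalently $\operatorname{rank}_2(H) = 1$ and the Smith normal form over the $2$-adics has exactly one unit, $(2t-1)$ entries of $2$-adic valuation $1$, and so on. I expect the main obstacle to be precisely this $2$-adic analysis: proving $\operatorname{rank}_2(H) = 1$ (that $\bar H$ has rank one over $\mathbb{F}_2$, which should follow from $\bar H = J$ mod $2$ once one checks every entry is $1$ modulo $2$) and then controlling the jump from valuation $1$ to valuation $\log_2$ of the $2$-part of $2t$, which requires understanding $H$ modulo higher powers of $2$ rather than just mod $2$. I would reduce this to computing ranks of $H$ over $\mathbb{Z}/2^k\mathbb{Z}$ using the relation $H^2 = 2H - 4tI$ iteratively, which constrains how the kernel grows as the modulus increases.
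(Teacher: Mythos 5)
Your inventory of usable facts is sound: the identity $H^2=2H-4tI$, $\det H=\pm(4t)^{2t}$, the observation that $H\equiv J\pmod 2$ so $\rank_2(H)=1$ (hence $m_1=1$ and every later $m_i$ is even), the value $\rank_p(H)=2t$ for odd primes $p\mid t$, and the pairing $m_i\,m_{4t+1-i}=4t$ coming from $H^\top=4tH^{-1}$ are all correct, and together they do more than you claim: for odd $p\mid t$ the pairing forces $v_p(m_i)=0$ for $i\leqslant 2t$ and $v_p(m_i)=v_p(t)$ for $i\geqslant 2t+1$, and when $v_2(t)\leqslant 1$ the pairing also pins down the $2$-parts (each pair of $2$-adic valuations sums to $v_2(4t)\in\{2,3\}$ with both terms at least $1$, which has a unique monotone solution). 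So your argument, once tightened, is actually complete whenever $t$ is odd or $t\equiv 2\pmod 4$. Two local repairs are needed: the trace of the idempotent only gives $2\rank_p(\bar H)\equiv 0\pmod p$, not $\rank_p(\bar H)=2t$; what pins the rank down is $\rank_p(\bar H)=\rank_p(\bar H^\top)=\rank_p(2I-\bar H)=4t-\rank_p(\bar H)$, using diagonalizability. Also, $m_2=\dots=m_{2t}=2$ is \emph{not} equivalent to $\rank_2(H)=1$; the latter only says all $m_i$ with $i\geqslant 2$ are even.

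The genuine gap is the case $4\mid t$ (i.e.\ $16\mid n$), which you correctly flag as the main obstacle but never close. Writing $a=v_2(t)\geqslant 2$, all of your established constraints (mod-$2$ rank, odd-prime ranks, pairing, determinant, monotonicity) still admit, for example for $a=2$, every exponent profile $(0,1^{(k)},2^{(4t-2-2k)},3^{(k)},4)$ with $0\leqslant k\leqslant 2t-1$; the theorem asserts $k=2t-1$, so you must rule out any invariant factor of $2$-adic valuation exactly $2$, i.e.\ prove that at least $2t$ invariant factors have valuation at most $1$. This is a statement about $H$ modulo $4$, and the congruence $H^2\equiv 2H\pmod 4$ together with $H^\top\equiv 2I-H$ does not visibly yield it; ``computing ranks over $\mathbb{Z}/2^k\mathbb{Z}$ iteratively'' is a hope, not an argument. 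The device that closes this gap in the literature (Michael--Wallis, Hacio\u{g}lu--Keman --- note the paper under review does not prove this theorem, it cites them, but it runs the same engine for the EW analogue in Lemma~2.7 and Sections~3--4) is a unimodular equivalence, not a congruence: normalize $H$ so its first row is $\mathbf{1}^\top$ and first column is $(1,-\mathbf{1}^\top)^\top$, add the first row to all other rows and subtract the first column from all other columns, obtaining $H\sim [1]\oplus 2(A+I)$ with $A$ a doubly regular tournament matrix of order $4t-1$ satisfying $(A+I)(A+I)^\top=t(I+J)$. This converts your intractable mod-$4$ question about $H$ into a mod-$2$ (and mod-$p$) rank question about $A+I$, handled by Sylvester's inequality exactly as in the paper's Lemmas~3.3 and~7.3; the top factors then come from the adjugate, $\det(A+I)(A+I)^{-1}=t^{2t-1}(I+A^\top-A)$, whose entries are $\pm t^{2t-1}$ and whose $2\times 2$ minors lie in $\{0,\pm 2t^{4t-2}\}$, after which a squeeze like the one in the paper's proof of Theorem~1.3 finishes. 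Without this reduction (or an equivalent substitute), your outline does not constitute a proof for orders divisible by $16$.
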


Armario~\cite{A} partially determined invariant factors of a skew-symmetric EW matrix of order $4t+2$. 
\begin{theorem}{\rm\cite[Theorem~1.2]{A}}\label{thm:a}
	Let $s_1,\ldots,s_{4t+2}$ be the invariant factors of a skew-symmetric EW matrix of order $4t+2$. 
       Then $s_1=1$ and $s_2=\cdots=s_{2t+1}=2$. 
\end{theorem}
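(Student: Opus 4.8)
The plan is to work with the \emph{determinantal divisors} $d_k$, the greatest common divisor of all $k\times k$ minors of $S$, which satisfy $s_k = d_k/d_{k-1}$ (with $d_0 = 1$). The assertion $s_1 = 1$, $s_2 = \cdots = s_{2t+1} = 2$ is equivalent to $d_k = 2^{k-1}$ for $1\le k\le 2t+1$. Two facts are immediate. First, every entry of $S$ is $\pm 1$, so $d_1 = s_1 = 1$. Second, reducing modulo $2$ turns $S$ into the all-ones matrix $J_{4t+2}$, whose $\mathbb{F}_2$-rank is $1$; hence exactly one invariant factor is odd and $2\mid s_i$ for $i\ge 2$. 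In particular $2^{k-1}\mid d_k$ for every $k$, so it remains to prove the matching upper bound $d_k\mid 2^{k-1}$ for $k\le 2t+1$, i.e.\ that $d_{2t+1}$ is exactly $2^{2t}$.

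For the odd part I would use the (genuine) skew-symmetry. For an odd prime $p$ the matrix $S+S^\top = 2I$ is invertible over $\mathbb{F}_p$, so $4t+2 = \rank_{\mathbb{F}_p}(S+S^\top)\le \rank_{\mathbb{F}_p}S + \rank_{\mathbb{F}_p}S^\top = 2\rank_{\mathbb{F}_p}S$, giving $\rank_{\mathbb{F}_p}S\ge 2t+1$. Thus some $(2t+1)\times(2t+1)$ minor is nonzero modulo $p$, so $p\nmid d_{2t+1}$. As this holds for all odd $p$, $d_{2t+1}$ is a power of $2$, and everything reduces to the single $2$-adic statement $v_2(d_{2t+1}) = 2t$, equivalently: at most $2t+1$ of the invariant factors of $S$ are divisible by $4$.

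This $2$-part is where I expect the real work to be. The tool I would bring in is the Gram identity $SS^\top = \operatorname{diag}(C,C)$ with $C = 4tI_{2t+1} + 2J_{2t+1}$. Writing $C = 2(2tI+J)$ and computing the Smith normal form of $aI+J$ directly gives $\operatorname{SNF}(C) = \operatorname{diag}(2,\,4t,\dots,4t,\,4t(4t+1))$ (with $4t$ repeated $2t-1$ times), so the Smith normal form, and hence the cokernel, of $SS^\top$ is completely explicit. I would then exploit that $S$ is normal ($SS^\top = S^\top S$) and that $S^\top = 2I - S$: multiplication by $S$ descends to an endomorphism $N$ of $\operatorname{coker}(SS^\top)$, and since $SS^\top$ acts as $0$ there while $S^\top = 2I - S$, one gets $N^2 = 2N$. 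Because $S$ is injective there is a short exact sequence $0\to \operatorname{coker}(S)\to \operatorname{coker}(SS^\top)\to \operatorname{coker}(S)\to 0$ in which the first map is multiplication by $S$; thus $\operatorname{coker}(SS^\top)$ is a self-extension of $\operatorname{coker}(S)$. Reducing $N$ modulo $2$ yields a nilpotent operator $\overline{N}$ with $\overline{N}^2 = 0$ on an $\mathbb{F}_2$-space of dimension $4t+2$, whence $\rank_{\mathbb{F}_2}\overline{N}\le 2t+1$.

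The main obstacle is to convert this rank bound into the count ``at most $2t+1$ invariant factors of $S$ are divisible by $4$''. A naive comparison of group orders is not enough: one checks that $\sum_i v_2(s_i) = 1 + 4t + 2t\,v_2(t)$, so pure index counting forces the conclusion only when $v_2(t)\le 1$, i.e.\ when $4\nmid t$; in the remaining case $4\mid t$ the structure of $\operatorname{coker}(SS^\top)$ as a module over $\mathbb{Z}_2[N]/(N^2-2N)$ must be used to control how the exponents of $\operatorname{coker}(S)$ distribute. Equivalently, after the unimodular reduction subtracting the first row of $S$ from the others and then dividing by $2$, the claim becomes the concrete second-layer rank estimate $\rank_{\mathbb{F}_2}\overline{M'}\ge 2t$ for the resulting matrix $M'$; establishing this inequality (via the EW relations modulo $4$, or via the nilpotent operator above) is the step I expect to be hardest, and it is precisely the content that distinguishes the case $4\mid t$.
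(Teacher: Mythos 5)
Your reduction is sound as far as it goes: $d_1=1$; the mod-$2$ observation that $S\equiv J\pmod 2$ forces exactly one odd invariant factor; and the use of genuine skew-symmetry, $S+S^\top=2I$, to get $\rank_p S\geqslant 2t+1$ and hence $p\nmid d_{2t+1}$ for every odd prime $p$, is correct and in fact handles all odd primes uniformly (the paper instead treats primes dividing $t$ and primes dividing $4t+1$ by separate arguments, the latter by the counting bound $p^{2t+1}>4t+1$). But the proof is not complete. Everything is reduced to the single $2$-adic claim $v_2(d_{2t+1})=2t$, i.e.\ $4\nmid s_{2t+1}$, and for this you offer a program rather than a proof. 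The nilpotency argument ($N^2=2N$ on $\operatorname{coker}(SS^\top)$, so $\overline{N}^2=0$ and $\rank_{\mathbb{F}_2}\overline{N}\leqslant 2t+1$) constrains an operator on $\operatorname{coker}(SS^\top)$, but, as you yourself concede, it does not control how the $2$-power exponents distribute in $\operatorname{coker}(S)$: a finite abelian $2$-group is not determined by being a self-extension of another (both $(\mathbb{Z}/2)^2$ and $\mathbb{Z}/4$ are self-extensions of $\mathbb{Z}/2$), and your order count $\sum_i v_2(s_i)=1+4t+2t\,v_2(t)$ settles the matter only when $v_2(t)\leqslant 1$. The case $4\mid t$ --- which you correctly identify as the crux --- is left open, so there is a genuine gap at precisely the hardest point of the theorem.

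The ingredient that closes this gap in the paper is combinatorial, not module-theoretic. First, the unimodular reduction of \eqref{eq:s} (add the first row to all other rows, subtract the first column from all other columns) shows $S$ is equivalent to $[1]\oplus 2(A+I)$ with $A$ the associated EW tournament matrix (Lemma~\ref{eq:bm}); this extracts the ubiquitous factor $2$ structurally and turns the claim into $b_1=\cdots=b_{2t}=1$ for the invariant factors of $A+I$, equivalently $\rank_p(A+I)\geqslant 2t$ for every prime $p$ dividing $\det(A+I)=t^{2t}(4t+1)$ --- this $A+I$ is essentially your matrix $M'$, so you have identified the right target. Second --- and this is what your proposal lacks --- for any prime $p\mid t$, including $p=2$ with $4\mid t$, the Gram identities \eqref{eq:AAt} and \eqref{eq:AtA} reduced mod $p$ exhibit explicit $\{0,1\}$-rows of $(A+I)(A^\top+I)$ and $(A^\top+I)(A+I)$, which place the vector $({\bf 0}^\top_t,{\bf 0}^\top_t,{\bf 1}^\top_{a},-{\bf 1}^\top_{2t+1-a})$ in $\mathrm{rowsp}_p(A+I)\cap\mathrm{rowsp}_p(A^\top+I)$; combined with subadditivity of rank applied to $(A+I)+(A^\top+I)=J+I$, this yields $\rank_p(A+I)\geqslant 2t+1$ (Lemma~\ref{lem:prank}, whence Proposition~\ref{prop:b2t1}). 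In other words, the $2$-part cannot be won from the abstract structure of $\operatorname{coker}(SS^\top)$ alone; one must use the block positions in the tournament Gram equations \eqref{eq:ew1}--\eqref{eq:ew2} to write down actual row vectors mod $2$. If you wish to salvage your approach, the concrete statement to prove is $\rank_{\mathbb{F}_2}(A+I)\geqslant 2t$ when $t$ is even, and that requires this Gram-identity input rather than nilpotency.
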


Further, Armario~\cite{A} conjectured the value of the remaining invariant factors of a skew-symmetric EW matrix.
In this note we prove his conjecture.
The following is our main theorem. 
\begin{theorem}\label{thm:main}
	A skew-symmetric EW matrix of order $4t+2$ has Smith normal form 
	\[
		\operatorname{diag}[1,\underbrace{2,\dots,2}_{2t+1},\underbrace{2t,\dots,2t}_{2t-1},2t(4t+1)].
	\]
\end{theorem}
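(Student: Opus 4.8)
The plan is to determine, for each prime $p$ separately, the multiset of $p$-adic valuations of the invariant factors of a skew-symmetric EW matrix $S$ of order $n=4t+2$; since the product of the invariant factors equals $\det(S)=(4t)^{2t}(8t+2)$ and Theorem~\ref{thm:a} already fixes $s_1=1$ and $s_2=\cdots=s_{2t+1}=2$, it suffices to pin down the remaining valuations prime by prime. The main internal input is the Smith normal form of $M:=SS^\top$, which by \eqref{eq:ew} is the block-diagonal matrix with two diagonal blocks $4tI_{2t+1}+2J_{2t+1}$. Writing $aI_m+J_m$ in Smith normal form as $\operatorname{diag}[1,a,\dots,a,a(a+m)]$ (with $m-2$ copies of $a$), a short reduction gives that $4tI_{2t+1}+2J_{2t+1}=2(2tI_{2t+1}+J_{2t+1})$ has Smith normal form $\operatorname{diag}[2,4t,\dots,4t,4t(4t+1)]$, and hence $M$ has invariant factors obtained by merging two such blocks. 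I will then read the $p$-ranks and the top invariant factor of $M$ off this explicit form.

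For odd $p$ the argument is clean and uses skew-symmetry decisively. Write $S=I+K$ with $K^\top=-K$; since $S^\top=2I-S$, every $v$ in the kernel $\ker_p S$ over $\mathbb F_p$ satisfies $S^\top v=2v$, so for $v,w\in\ker_p S$ one has $2\,v^\top w=(S^\top v)^\top w=v^\top(Sw)=0$, and as $p$ is odd $\ker_p S$ is totally isotropic. Consequently $\ker_p S\subseteq(\ker_p S)^\perp=\operatorname{im}_p S^\top$, and the standard identity $\dim\ker_p(SS^\top)=\dim\ker_p S^\top+\dim(\ker_p S\cap\operatorname{im}_p S^\top)$ collapses to $\rank_p M=2\rank_p S-n$. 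Since $\rank_p M$ is known from the Smith normal form of $M$, this determines $\rank_p S$, hence the number of invariant factors of $S$ divisible by $p$: exactly one for $p\mid 4t+1$ and exactly $2t$ for odd $p\mid t$. For $p\mid 4t+1$ the single nonzero valuation must then equal $v_p(4t+1)$ by the determinant. For odd $p\mid t$, the relation $S^{-1}=S^\top M^{-1}$ shows that $p^{\,v_p(s_n(M))}S^{-1}$ is $p$-integral, so the largest valuation of $S$ is at most $v_p(s_n(M))=v_p(t)$; since the $2t$ nonzero valuations sum to $v_p(\det S)=2t\,v_p(t)$, they are all equal to $v_p(t)$. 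In both cases the odd part of the claimed Smith normal form follows.

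The prime $p=2$ is the main obstacle, since the isotropy argument breaks down. Here $S\equiv J\pmod 2$, so $\rank_2 S=1$ and $n-1$ invariant factors are even; Theorem~\ref{thm:a} fixes the $2t$ smallest of these valuations to be $1$, and the determinant fixes their total. When $t$ is odd this already finishes: there are $n-1$ positive valuations summing to $v_2(\det S)=n-1$, so each equals $1$. When $t$ is even, the remaining $2t+1$ valuations must be shown to consist of one further $1$ together with $2t$ copies of $1+v_2(t)$. For this I will exploit the chain $M\mathbb Z^n\subseteq S\mathbb Z^n\subseteq\mathbb Z^n$, which yields a short exact sequence $0\to\operatorname{coker}(S^\top)\to\operatorname{coker}(M)\to\operatorname{coker}(S)\to 0$ exhibiting $\operatorname{coker}(M)$ as a self-extension of the finite abelian group $\operatorname{coker}(S)$; combined with the known $2$-adic type of $\operatorname{coker}(M)$ (read from the Smith normal form of $M$) and the lower valuations supplied by Theorem~\ref{thm:a}, this constrains the $2$-adic type of $\operatorname{coker}(S)$. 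The delicate point, and the step I expect to require the most care, is proving that this self-extension condition together with the fixed bottom valuations determines the $2$-adic type uniquely, ruling out valuations strictly between $1$ and $1+v_2(t)$ as well as any valuation exceeding $1+v_2(t)$; once this is established, assembling the $2$-adic and odd-adic data yields exactly the invariant factors in the statement.
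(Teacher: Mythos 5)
Your treatment of the odd primes is correct and complete, and it is genuinely different from (and in places slicker than) the paper's: the observation that $S^\top=2I-S$ makes $\ker_p S$ totally isotropic, hence $\rank_p(SS^\top)=2\rank_p S-n$, replaces the paper's row-space intersection lemma for the tournament matrix $A+I$, and combining this with the exponent bound $s_{4t+2}(S)\mid s_{4t+2}(SS^\top)=4t(4t+1)$ and the determinant does settle every odd prime, as well as $p=2$ when $t$ is odd, by pure counting. The problem is $p=2$ with $t$ even, and this is a genuine gap, not a detail: your isotropy argument needs $2$ invertible, and $S\equiv J\pmod 2$ carries no information, so everything rests on the cokernel self-extension plan --- and there you explicitly defer the decisive step (``the delicate point \dots I expect to require the most care''), namely that the extension condition plus the bottom valuations determines the $2$-adic type. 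Note that even $t$ cannot be dismissed: Lemma~\ref{lem:o} forces $8t+1$ to be an odd square, so $t\in\{1,3,6,10,28,36,\dots\}$, and $t=6,10,28,\dots$ are even.

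The deferred step is, moreover, the entire difficulty rather than a routine verification. The constraints you actually have at $p=2$ --- exactly $4t+1$ positive valuations, the smallest $2t$ of them equal to $1$, the total sum $1+2t(2+v_2(t))$, the exponent bound $v_2\leqslant v_2(t)+2$, and domination of $\operatorname{coker}(S)$ by $\operatorname{coker}(SS^\top)$ as subgroup and quotient --- do \emph{not} pin down the valuation multiset once $v_2(t)\geqslant 2$: for instance the multiset with $t$ parts equal to $v_2(t)+2$, $t$ parts equal to $2$ and the remaining parts equal to $1$ satisfies all of them. Excluding such candidates amounts to deciding which abelian $2$-groups arise as middle terms of self-extensions of a given group, i.e.\ to positivity of Littlewood--Richardson coefficients $c^{\lambda}_{\mu\mu}$ (Hall's theorem), and you give no argument there; this would likely be harder than the theorem itself. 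Compare with how the paper sidesteps the prime $2$ entirely: it passes to the tournament matrix via $S\sim[1]\oplus 2(A+I)$, so that $2$ behaves like any other prime dividing $t$, proves $\rank_p(A+I)=2t+1$ for \emph{every} prime $p\mid t$ (Lemma~\ref{lem:prank}, which works at $p=2$ because it uses the Gram identities \eqref{eq:AAt}--\eqref{eq:AtA}, not isotropy), computes the top factor exactly, $s_{4t+2}=2t(4t+1)$, from the adjugate of $S$ (your bound from $SS^\top$ only gives $v_2(s_{4t+2})\leqslant v_2(4t)$, which is off by one), and then finishes all primes simultaneously with the divisibility claim $s_{4t+1}\mid 2t$ and the product formula. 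To repair your write-up with least effort, replace the self-extension plan by this reduction to $A+I$: your intersection-of-kernels idea is then no longer available, but the paper's rank argument is, and the rest of your prime-by-prime bookkeeping goes through.
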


Now we provide an outline of the proof. 
Let $S$ be a skew-symmetric EW matrix of order $n=4t+2$.  
In particular, we have that 
\begin{align}\label{eq:2}
SS^\top=\begin{pmatrix}(n-2)I+2J & O \\ O & (n-2)I+2J \end{pmatrix}.
\end{align} 

\begin{lemma}\label{lem:m1}
Let $S$ be a skew-symmetric EW matrix of order $n=4t+2$ and let $s_1,\ldots,s_{4t+2}$ be the invariant factors of $S$. 
\begin{enumerate}
\item $s_{2t+2}=2$.
\item $s_{4t+2}=2t(4t+1)$.
\end{enumerate}
\end{lemma}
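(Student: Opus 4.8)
The plan is to establish the two extremal invariant factors $s_{2t+2}$ and $s_{4t+2}$ using the product relation \eqref{eq:2}, together with the already-known values from Theorem~\ref{thm:a}. The key observation is that the Smith normal form interacts multiplicatively: if $s_1,\ldots,s_n$ are the invariant factors of $S$, then the invariant factors of $SS^\top$ are determined by those of $S$ and $S^\top$, and since $S$ and $S^\top$ are equivalent (indeed $S^\top$ has the same Smith normal form as $S$), one can relate $\prod s_i$ and the elementary divisors of $SS^\top$. Concretely, $\det(S) = (4t)^{2t}(8t+2)$, so $\prod_{i=1}^{4t+2} s_i = (4t)^{2t}(8t+2) = 2^{2t+1}(2t)^{2t}(4t+1)$, which will pin down the product of the large factors once the small ones are known.

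For part~(i), that $s_{2t+2}=2$, I would work modulo a suitable prime or with the rank of $S$ over a finite field. The natural tool is to reduce $SS^\top$ modulo $2$: from \eqref{eq:2} we get $SS^\top \equiv (n-2)I + 2J \equiv nI \equiv 2I \equiv O \pmod 2$ in each block, so $S$ has low rank mod $2$, which forces many invariant factors to be even. Combined with Theorem~\ref{thm:a} giving $s_1=1$ and $s_2=\cdots=s_{2t+1}=2$, a rank count over $\mathbb{F}_2$ should show that $s_{2t+2}$ is also $2$ (rather than a higher power of $2$); the point is to compute $\operatorname{rank}_{\mathbb{F}_2}(S)$ exactly. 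Since $SS^\top \pmod 2$ vanishes, $\operatorname{rank}_{\mathbb{F}_2}(S) \le n/2 = 2t+1$, and the skew-symmetry $S \equiv S + S^\top = 2I \equiv O$ is unhelpful directly, so instead I would examine $S \pmod 2$ as a symmetric (indeed alternating) matrix and use that its rank is even, bounding it to be exactly $2t$ or similar; pinning this down gives the number of invariant factors divisible by $2$ but not by $4$.

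For part~(ii), that $s_{4t+2}=2t(4t+1)$, the cleanest route is via the adjugate. Since $S\,\operatorname{adj}(S) = \det(S)\,I$, the largest invariant factor $s_n$ equals $\det(S)/d_{n-1}(S)$, where $d_{n-1}(S)$ is the gcd of all $(n-1)\times(n-1)$ minors of $S$, equivalently $s_n = \det(S)/\prod_{i=1}^{n-1}s_i$ only if I already know the other factors. Rather than that, I would use the relation between $s_n(S)$ and the structure of $S^{-1}$: from \eqref{eq:2}, $S^{-1} = S^\top\left((n-2)I+2J\right)^{-1}$ in block form, and $\left((n-2)I+2J\right)^{-1} = \frac{1}{n-2}I - \frac{2}{(n-2)(2n-2)}J$, so the denominators appearing in $S^{-1}$ are controlled by $(n-2)$ and $2n-2 = 2(2t)(\ldots)$. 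The largest invariant factor $s_n$ is precisely the least common denominator clearing $S^{-1}$ to an integer matrix, and I expect this to evaluate to $2t(4t+1)$ after simplification with $n=4t+2$.

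The main obstacle will be part~(i): obtaining the exact $\mathbb{F}_2$-rank of $S$ to distinguish $s_{2t+2}=2$ from $s_{2t+2}=4$. The determinant and the product relation alone do not separate these cases, since they only control the total $2$-adic valuation, not its distribution across the factors. I anticipate needing a finer local argument at the prime $2$, analysing $S$ over $\mathbb{Z}/4\mathbb{Z}$ or using the alternating structure mod $2$ to force the rank, rather than a purely global determinant count. Part~(ii), by contrast, should reduce to a direct inversion computation using the explicit inverse of $(n-2)I+2J$.
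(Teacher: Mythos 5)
Your plan for part (i) is where the proposal genuinely fails, in two ways. First, the tool you propose cannot decide the question: the $\mathbb{F}_2$-rank of $S$ equals the number of \emph{odd} invariant factors, and Theorem~\ref{thm:a} already forces this to be exactly $1$ (only $s_1$ is odd, since $2=s_2$ divides every later $s_i$). So computing $\rank_2(S)$ ``exactly'' yields the value $1$ and carries no information about $s_{2t+2}$; in particular it cannot separate $s_{2t+2}=2$ from $s_{2t+2}=4$. Second, and more seriously, the prime $2$ is not the only danger: a priori $s_{2t+2}$ could be divisible by an odd prime $p\mid t$ (nothing in your argument excludes $s_{2t+2}=2p$), or by a prime dividing $4t+1$. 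Ruling these out is the entire content of the paper's proof of (i): one passes to the EW tournament matrix $A$ via the unimodular equivalence $S\sim[1]\oplus 2(A+I)$, so that $s_{i+1}=2b_i$ where the $b_i$ are the invariant factors of $A+I$, and then proves $\rank_p(A+I)=2t+1$ for \emph{every} prime $p\mid t$ (Lemma~\ref{lem:prank}) --- the lower bound by exhibiting, from \eqref{eq:AAt} and \eqref{eq:AtA}, an explicit nonzero vector in $\mathrm{rowsp}_p(A+I)\cap\mathrm{rowsp}_p(A^\top+I)$, and the upper bound by Sylvester's rank inequality. This forces $p\nmid b_{2t+1}$, and a short counting argument disposes of primes dividing $4t+1$; hence $b_{2t+1}=1$ and $s_{2t+2}=2$. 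Note that this same device is what resolves the prime $2$ when $t$ is even (the case you flagged as the main obstacle): the factorization $[1]\oplus 2(A+I)$ splits one factor of $2$ off each $s_i$, $i\geqslant 2$, and the residual question is again a mod-$p$ rank statement about $A+I$ with $p=2\mid t$, not any direct mod-$2$ or mod-$4$ analysis of $S$ itself. (Incidentally, your opening claim that the invariant factors of $SS^\top$ are determined by those of $S$ and $S^\top$ is false in general; only the determinant relation you extract from it is valid.)

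Your plan for part (ii) is essentially the paper's: from \eqref{eq:2} one gets $S^{-1}=S^\top\bigl(\tfrac{1}{4t}I-\tfrac{1}{4t(4t+1)}\operatorname{diag}[J,J]\bigr)$, and $s_{4t+2}=\det(S)/d_{4t+1}(S)$ is the least denominator clearing $S^{-1}$. But the sketch stops just short of the step that decides the constant. To evaluate the entries of $\det(S)S^{-1}$ you need the block row sums of $S$, and the off-diagonal blocks have row sums $\pm\sqrt{8t+1}$ (Lemma~\ref{lem:1}); you then need Lemma~\ref{lem:o}, that $\sqrt{8t+1}$ is an odd integer, to conclude that the entries $2(4t)^{2t-1}\cdot 4t$, $2(4t)^{2t-1}(4t+2)$, and $2(4t)^{2t-1}(4t+1\pm\sqrt{8t+1})$ are all even multiples of $2(4t)^{2t-1}$ with gcd exactly $4(4t)^{2t-1}$. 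Without this parity input your computation only bounds $s_{4t+2}$ between $2t(4t+1)$ and $4t(4t+1)$, and does not distinguish them.
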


We will give the proof of Lemma~\ref{lem:m1} 
in subsequent sections. 
Using Lemma~\ref{lem:m1} together with Theorem~\ref{thm:a}, we can determine the invariant factors of a skew-symmetric EW matrix.
Indeed, we prove Theorem~\ref{thm:main} as follows.
\begin{proof}[Proof of Theorem~\ref{thm:main}]
Since the other invariant factors have been determined in Theorem~\ref{thm:a} and Lemma~\ref{lem:m1}, it remains to show $s_{2t+3}=\dots=s_{4t+1}=2t$. 
Using \eqref{eq:ew}, we see that $\det(S)=s_1\cdots s_{4t+2}=2^{2t+1}(2t)^{2t}(4t+1)$.
Hence, by Theorem~\ref{thm:a} and Lemma~\ref{lem:m1}, we have 
\begin{align}\label{eq:m}
s_{2t+3}\cdots s_{4t+1}=(2t)^{2t-1}.
\end{align}
\paragraph{Claim 1:} $s_{4t+1}$ divides $2t$. 
Since $s_2=2$ divides $s_{i}$ for each $i\geqslant 2$, we may set $s_i=2s^\prime_i$ for $2\leqslant i\leqslant 4t+2$. 
We have $s^\prime_{4t+1}$ divides $s^\prime_{4t+2}=t(4t+1)$ and $\text{gcd}(t,4t+1)=1$.
Furthermore, since $4s^\prime_{4t+1} s^\prime_{4t+2} = 4s^\prime_{4t+1} t(4t+1)$ divides $2(4t+1)(4t)^{2t}$ we must have that $s^\prime_{4t+1}$ divides $t$.
Therefore $s_{4t+1}$ is a divisor of $2t$.

Since $s_i$ divides $s_{i+1}$ for each $i$ and $s_{4t+1}\leqslant 2t$, by Claim~1, 
we have that $s_{i}\leqslant 2t$ for $i=2t+3,\dots,4t$, from which it follows that 
\begin{align}\label{eq:m1}
s_{2t+3}\cdots s_{4t+1}\leqslant(2t)^{2t-1}.
\end{align}
By \eqref{eq:m}, equality must hold in \eqref{eq:m1}. 
Therefore $s_{2t+3}=\dots=s_{4t+1}=2t$. 
\end{proof}

Our methods for proving Lemma~\ref{lem:m1} involve a tournament matrix that one associates with an EW matrix, called an \emph{EW tournament matrix} (see Section~\ref{sec:pre} for its definition).
Even though it is not needed for the proof of Lemma~\ref{lem:m1}, we also determine the Smith normal form of an EW tournament matrix (see Theorem~\ref{thm:snfa}).

The organisation of this paper is as follows. 
In Section~\ref{sec:pre} we prepare the basic tools for the proof of our main result. 
We will prove Lemma~\ref{lem:m1} (i) and (ii) in Section~\ref{sec:2t2} and Section~\ref{sec:4t2}, respectively. 
In Section~\ref{sec:ex}, we provide examples of EW matrices with Smith normal forms different from Theorem~\ref{thm:main}.   
Note that the EW matrix in Example~\ref{ex:2} is a counterexample to \cite[Corollary 2]{KMS}.
We 
attempt to deal with the counterexample to \cite[Corollary 2]{KMS}.
In Section~\ref{appendix:a} we provide a revised result to \cite[Corollary 2]{KMS}, and we pursue the method demonstrated in Section~\ref{appendix:a} to obtain a result for the case that $4t+1$ is a square of a prime in Section~\ref{appendix:b}.  
Finally, in Section~\ref{sec:snftournament} we determine the Smith normal form of an EW tournament matrix.

\section{Preliminaries}\label{sec:pre}
In this section we collect some facts about skew-symmetric EW matrices and their invariant factors, which we will use in later sections. 

Let $X$ be an EW matrix of order $n$.
It is well-known that $2n-2$ must be a sum of squares~\cite{CRCHAndbook2007}. 
Furthermore, Armario and Frau \cite{AF} showed that $2n-3$ must be a square if $X$ is skew-symmetric. 
Indeed, they showed the following. 
\begin{lemma}{\rm\cite[Proposition~2.4]{AF}}\label{lem:o}
Suppose there exists a skew-symmetric EW matrix of order $4t+2$.
Then $\sqrt{8t+1}$ is an odd integer. 
\end{lemma}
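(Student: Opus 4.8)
The plan is to pass from the EW matrix to its skew-symmetric part and then exploit the fact that the determinant of an integer skew-symmetric matrix of even order is a perfect square (being the square of the Pfaffian). Write the skew-symmetric EW matrix as $S = I + C$, so that $C = S - I$ is a genuine skew-symmetric integer matrix (zero diagonal, $\pm 1$ off-diagonal). Since $C^\top = -C$, equation \eqref{eq:2} gives
\[
I - C^2 = SS^\top = \begin{pmatrix}(n-2)I+2J & O \\ O & (n-2)I+2J\end{pmatrix},
\]
and hence
\[
C^2 = -\begin{pmatrix}(n-3)I+2J & O \\ O & (n-3)I+2J\end{pmatrix}.
\]

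Next I would compute $\det(C)$ through $\det(C)^2 = \det(C^2)$. Because $n = 4t+2$ is even, the sign obtained from factoring out $-1$ disappears, and the block-diagonal structure gives $\det(C^2) = \left(\det((n-3)I + 2J_{n/2})\right)^2$. The eigenvalues of $(n-3)I + 2J_{n/2}$ are $2n-3 = 8t+1$ (once) and $n - 3 = 4t-1$ (with multiplicity $n/2 - 1 = 2t$), so that $\det((n-3)I + 2J_{n/2}) = (8t+1)(4t-1)^{2t}$. Therefore $\det(C)^2 = \left[(8t+1)(4t-1)^{2t}\right]^2$.

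The key step is now to invoke the Pfaffian: since $C$ has integer entries and is skew-symmetric of even order $n$, we have $\det(C) = \operatorname{Pf}(C)^2$ with $\operatorname{Pf}(C) \in \mathbb{Z}$. In particular $\det(C) \geqslant 0$, which resolves the sign ambiguity above and forces $\det(C) = (8t+1)(4t-1)^{2t}$; being $\operatorname{Pf}(C)^2$, this quantity is a perfect square. Since $(4t-1)^{2t} = \left((4t-1)^t\right)^2$ is already a perfect square and $\det(C) \neq 0$ (as $8t+1$ and $4t-1$ are positive), it follows that $8t+1$ must itself be a perfect square. As $8t+1$ is odd, its square root is an odd integer, giving the conclusion.

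The matrix identity and the eigenvalue computation of $(n-3)I + 2J_{n/2}$ are entirely routine; the one genuinely substantive move — and the place where skew-symmetry is essential rather than cosmetic — is recognising that $\det(C)$ is forced to be a perfect square via the Pfaffian. The only point requiring a moment of care is the extraction of the square factor $(4t-1)^{2t}$, which is legitimate precisely because $\det(C)$ is nonzero.
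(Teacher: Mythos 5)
Your proof is correct, and it takes a genuinely different route from the one in the paper. The paper never reproves Lemma~\ref{lem:o} directly: it cites \cite[Proposition~2.4]{AF}, and then gives its own argument that implies the lemma, namely Theorem~\ref{thm:quadforma}. That argument passes to the associated EW tournament matrix $A$ of order $4t+1$ via \eqref{eq:s}, computes $\det(AA^\top)$ from the block structure \eqref{eq:ew1} by a long chain of row operations plus a Schur-complement identity, compares the result with $\det(A)^2=t^{4t}(4t-1)^2$ (which rests on Lemma~\ref{lem:snfa1}, i.e.\ on \cite{GS}), and concludes that the block size $a$ satisfies $a^2-(2t+1)a+t(t-1)=0$; since $a$ is a non-negative integer and the discriminant of this quadratic is exactly $8t+1$, the quantity $\sqrt{8t+1}$ must be an integer, odd because $8t+1$ is odd. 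Your argument instead stays entirely with $S$: you pass to the genuine skew-symmetric integer matrix $C=S-I$, obtain $C^2$ from \eqref{eq:2}, deduce $\det(C)=\pm(8t+1)(4t-1)^{2t}$, and then invoke Cayley's theorem $\det(C)=\operatorname{Pf}(C)^2$ with $\operatorname{Pf}(C)\in\mathbb{Z}$ to force $\det(C)$ to be a non-negative perfect square, so that $\sqrt{8t+1}=|\operatorname{Pf}(C)|/(4t-1)^t$ is rational, hence an integer, hence odd. Each step of yours checks out (the sign resolution via positivity of $(8t+1)(4t-1)^{2t}$, the eigenvalues $8t+1$ and $4t-1$ of $(n-3)I+2J_{2t+1}$, and the extraction of the square factor are all sound). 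What your approach buys: it is shorter, avoids the tournament-matrix correspondence and the external determinant evaluation of \cite{GS}, and isolates precisely where skew-symmetry is used. What the paper's approach buys: the quadratic for $a$ is strictly more information --- it pins down the admissible block sizes $a=\bigl((2t+1)\pm\sqrt{8t+1}\bigr)/2$ in \eqref{eq:ew1}, which is needed again in Section~\ref{sec:snftournament}, whereas the Pfaffian argument yields only the squareness of $8t+1$.
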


A \textbf{tournament matrix} is a $\{0,1\}$-matrix $A$ such that $A+A^\top=J-I$.  
A tournament matrix $A$ of order $4t+1$ is called an \textbf{EW tournament matrix} if for some $a\in\mathbb{N}\cup\{0\}$,   
\begin{align}
A A^\top&=t(I_{4t+1}+J_{4t+1})+\begin{pmatrix}
-J_t & -J  & -J  &-J\\
-J & J_t  & O  & O \\
-J & O & O_{a}  &-J\\
-J & O & -J  & O_{2t+1-a}
\end{pmatrix},\label{eq:ew1}\\
A^\top A&=t(I_{4t+1}+J_{4t+1})+\begin{pmatrix}
J_t & -J  & O & O \\
-J & -J_t  & -J  & -J \\
O & -J  & J_{a}  & O \\
O & -J  & O & -J_{2t+1-a}
\end{pmatrix}.\label{eq:ew2} 
\end{align}
Since $A+A^\top=J-I$,  we have that
\begin{align}
(A+I) (A^\top+I)&=t(I_{4t+1}+J_{4t+1})+\begin{pmatrix}
O_{t} & O & O & O \\
O & 2J_t  & J  & J \\
O & J  & J_{a}  & O \\
O & J  & O & J_{2t+1-a}
\end{pmatrix}, \label{eq:AAt}\\
(A^\top+I)(A+I) &=t(I_{4t+1}+J_{4t+1})+\begin{pmatrix}
2J_{t} & O & J  & J \\
O & O_{t} & O & O \\
J & O & 2J_{a}  & J \\
J  & O & J & O_{2t+1-a}
\end{pmatrix}.\label{eq:AtA}
\end{align}

A skew-symmetric EW matrix $S$ corresponds to an EW tournament matrix $A$ via the equation \cite{A2015}:
\begin{align}\label{eq:s}
S=\begin{pmatrix}1 & \bf{1}^\top \\ -\bf{1} & I+A-A^\top \end{pmatrix}.
\end{align}
The invariant factors of $S$ and those of $A+I$ are related as follows. 
\begin{lemma}{\rm\cite[Lemma~2,2, Corollary~2.3]{A}}\label{eq:bm}
Let $S$ be a skew-symmetric EW matrix of order $4t+2$ and let $A$ be the EW tournament matrix of order $4t+1$ obtained by \eqref{eq:s}.  
Let $s_1,\ldots,s_{4t+2}$ be the invariant factors of $S$ and $b_1,\ldots,b_{4t+1}$ be the invariant factors of $A+I$. 
Then
\begin{enumerate}
\item  $s_{i+1} = 2b_i$ for $i=1,\ldots,4t+1$. 
\item $\det(A+I)=t^{2t}(4t+1)$. 
\end{enumerate}
\end{lemma}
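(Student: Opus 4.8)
The plan is to reduce $S$ to a block-diagonal matrix by unimodular row and column operations and then read off its Smith normal form directly from that of $A+I$. First I would use the tournament identity $A+A^\top=J-I$, which gives $A^\top=J-I-A$ and hence $A-A^\top=2A+I-J$, to rewrite the lower-right block of \eqref{eq:s} as $I+A-A^\top=2(A+I)-J$. Thus
\[
S=\begin{pmatrix} 1 & \mathbf{1}^\top \\ -\mathbf{1} & 2(A+I)-J \end{pmatrix}.
\]

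Next I would exploit the leading $1$ to clear the first row and column. Adding the first row to each of the remaining $4t+1$ rows is a unimodular row operation: it turns the first column into $(1,\mathbf{0}^\top)^\top$ and, since it adds $\mathbf{1}^\top$ to every row of the lower block, it replaces $2(A+I)-J$ by $2(A+I)-J+J=2(A+I)$. Then subtracting the new first column from each of the other columns clears the first row without touching the lower block, producing the equivalent matrix
\[
\begin{pmatrix} 1 & \mathbf{0}^\top \\ \mathbf{0} & 2(A+I) \end{pmatrix}.
\]

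Since these operations preserve the determinant, $\det(S)=\det\bigl(2(A+I)\bigr)=2^{4t+1}\det(A+I)$; substituting the known value $\det(S)=(4t)^{2t}(8t+2)=2^{4t+1}t^{2t}(4t+1)$ yields part (ii), namely $\det(A+I)=t^{2t}(4t+1)$, and in particular shows $A+I$ is nonsingular so each $b_i$ is positive. For part (i) I would invoke that multiplying a matrix by a scalar multiplies every invariant factor by that scalar, so the Smith normal form of $2(A+I)$ is $\operatorname{diag}[2b_1,\ldots,2b_{4t+1}]$. Because $b_i\mid b_{i+1}$ and $1\mid 2b_1$, the diagonal entries $1,2b_1,\ldots,2b_{4t+1}$ already form a divisibility chain, so the displayed block-diagonal matrix is itself in Smith normal form; hence $s_1=1$ and $s_{i+1}=2b_i$ for $i=1,\ldots,4t+1$.

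The only genuinely delicate point is the last step of part (i): one must confirm that the direct sum $\operatorname{diag}[1]\oplus 2(A+I)$ needs no further reduction, i.e.\ that prepending $1$ to the scaled invariant factors keeps them in the correct divisibility order. This is exactly where nonsingularity of $A+I$ and the defining property $b_i\mid b_{i+1}$ of invariant factors enter; everything else is routine bookkeeping with elementary unimodular operations.
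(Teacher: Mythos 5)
Your proposal is correct and follows essentially the same route as the paper: both use the identity $A+A^\top=J-I$ and the unimodular operations of adding the first row to all other rows and subtracting the first column from all other columns to show $S$ is equivalent to $[1]\oplus 2(A+I)$, then read off the determinant and invariant factors. Your write-up is simply more explicit about the final bookkeeping step (that $1,2b_1,\ldots,2b_{4t+1}$ already form a divisibility chain), which the paper leaves implicit.
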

\begin{proof}
First observe that $S$ is equivalent to $[1]\oplus2(A+I)$.  Indeed, in \eqref{eq:s}, add the first row to all other rows and subtract the first column from all other columns.
Using this equivalence, we see that $s_{i+1} = 2b_i$ for $i=1,\ldots,4t+1$ and $\det(A+I)=(1/2^{4t+1})\det(S)=t^{2t}(4t+1)$. 
\end{proof}

Let $M$ be an integer matrix.
The invariant factors of $M$ can be determined from the the greatest common divisor of all minors of $M$.
Indeed, define $d_i(M)$ to be the greatest common divisor of all $i\times i$ minors of $M$ and set $d_0(M)=1$.
Then the next result is standard. 
 
\begin{lemma}[Corollary 1.20 of \cite{CN}]\label{lem:minor}
Let $M$ be an integer matrix of order $n$ and rank $r$ with invariant factors $m_1,\dots,m_n$. 
Then $m_i=d_i(M)/d_{i-1}(M)$ for $i=1,\dots,r$. 
\end{lemma}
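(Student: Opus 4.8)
The plan is to exploit the fact that the quantities $d_i(M)$ are invariants of the equivalence class of $M$, so that each $d_i$ can be computed directly from the Smith normal form. First I would establish this invariance: if $N = PMQ$ with $P$ and $Q$ unimodular, then $d_i(N) = d_i(M)$ for every $i$. The key tool is the Cauchy--Binet formula, which expresses any $i \times i$ minor of a product as an integer linear combination of products of $i \times i$ minors of the two factors. Consequently every $i \times i$ minor of $PM$ is an integer combination of $i \times i$ minors of $M$, so $d_i(M) \mid d_i(PM)$; applying the same reasoning to $M = P^{-1}(PM)$, and using that $P^{-1}$ is again an integer matrix because $P$ is unimodular, gives the reverse divisibility, whence $d_i(PM) = d_i(M)$. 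The identical argument on the right yields $d_i(MQ) = d_i(M)$, and combining the two gives $d_i(N) = d_i(M)$.

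Next I would compute $d_i$ for a matrix already in Smith normal form. Write $D = \operatorname{diag}[m_1,\dots,m_r,0,\dots,0]$ with $m_1 \mid m_2 \mid \cdots \mid m_r$. The only nonzero $i \times i$ minors of $D$ are those obtained by selecting the same $i$ indices among the first $r$, giving products $m_{j_1}\cdots m_{j_i}$ with $j_1 < \cdots < j_i \leqslant r$. Because of the divisibility chain, $m_1\cdots m_i$ divides every such product and is itself one of them (take $j_k = k$), so $d_i(D) = m_1 \cdots m_i$ for $1 \leqslant i \leqslant r$.

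Finally, since $M$ is equivalent to its Smith normal form $D$, the invariance from the first step gives $d_i(M) = d_i(D) = m_1 \cdots m_i$ for $1 \leqslant i \leqslant r$, together with the convention $d_0(M) = 1$. Dividing consecutive values then yields $m_i = d_i(M)/d_{i-1}(M)$, as required. I expect the main obstacle to be the first step, namely proving cleanly that $d_i$ is an equivalence invariant: although Cauchy--Binet makes the statement transparent, one must be careful with the bookkeeping of which minors occur and, crucially, with the integrality of $P^{-1}$ and $Q^{-1}$, so as to obtain genuine divisibilities in $\mathbb{Z}$ rather than merely rational identities. Everything after that is a direct computation on the diagonal matrix $D$.
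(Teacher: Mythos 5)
Your proof is correct. The paper itself gives no proof of this lemma --- it is quoted as a standard fact with a citation to Corollary 1.20 of \cite{CN} --- and your argument (invariance of the $d_i$ under unimodular equivalence via Cauchy--Binet, followed by the direct computation $d_i = m_1\cdots m_i$ on the Smith normal form, using the divisibility chain to see that $m_1\cdots m_i$ divides every nonzero $i\times i$ minor of the diagonal matrix) is precisely the canonical textbook proof of this result, with the key subtlety --- integrality of $P^{-1}$ and $Q^{-1}$ to get genuine divisibility in $\mathbb{Z}$ in both directions --- handled correctly.
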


Let $M$ be an invertible $n\times n$ matrix with rows and columns indexed by $\{1,\ldots,n\}$. 
For a subset $I \subseteq \{1,\ldots,n\}$, define $I'=\{1,\ldots,n\}\setminus I$.  
For nonempty subsets $I,J \subseteq \{1,\ldots,n\}$ with $|I|=|J|$, we denote by $M_{I,J}$ the matrix obtained from $M$ by restricting rows to $I$ and columns to $J$ respectively.
 
\begin{lemma}[Page 21 of \cite{HJ}]\label{lem:detminor}
Let $M$ be an invertible $n\times n$ matrix with rows and columns indexed by $\{1,\ldots,n\}$. 
Suppose $I,J \subseteq \{1,\ldots,n\}$ satisfy $|I|=|J| \ne 0$.
Then
\begin{align*}
\det(M_{I,J})=\pm\det(M) \det((M^{-1})_{J',I'}).
\end{align*}
\end{lemma}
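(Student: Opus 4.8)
The plan is to prove the identity first in the special case $I=J=\{1,\ldots,k\}$, where $k=|I|=|J|$, and then to reduce the general case to this one by permuting the rows and columns of $M$. Since the statement only claims equality up to a sign, all permutation signs may be absorbed into the $\pm$, which keeps the bookkeeping light.

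For the special case I would write $M$ and its inverse in conformally partitioned block form
\[
M=\begin{pmatrix} A & B \\ C & D\end{pmatrix},\qquad M^{-1}=\begin{pmatrix} W & X \\ Y & Z\end{pmatrix},
\]
where $A=M_{I,J}$ is the top-left $k\times k$ block and $Z$ is the bottom-right $(n-k)\times(n-k)$ block, which in this case is exactly $(M^{-1})_{J',I'}$ since $I'=J'=\{k+1,\ldots,n\}$. Reading off the top-right and bottom-right blocks of $MM^{-1}=I$ gives $AX+BZ=O$ and $CX+DZ=I_{n-k}$. The key step is then the block identity
\[
\begin{pmatrix} A & B \\ C & D\end{pmatrix}\begin{pmatrix} I_k & X \\ O & Z\end{pmatrix}=\begin{pmatrix} A & AX+BZ \\ C & CX+DZ\end{pmatrix}=\begin{pmatrix} A & O \\ C & I_{n-k}\end{pmatrix}.
\]
Taking determinants, the two block-triangular factors evaluate to $\det(M)\det(Z)$ on the left and $\det(A)$ on the right, yielding $\det(A)=\det(M)\det(Z)$, that is $\det(M_{I,J})=\det(M)\det((M^{-1})_{J',I'})$ exactly. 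The virtue of this argument is that it never requires the block $A$ to be invertible, so the case where the minor $M_{I,J}$ is singular is handled uniformly and no density or continuity argument is needed.

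To treat general $I,J$, I would choose permutation matrices $P$ and $Q$ that move the rows indexed by $I$ and the columns indexed by $J$ into the first $k$ positions in increasing order, and set $N=PMQ$. Then the top-left $k\times k$ block of $N$ is exactly $M_{I,J}$, while examining $N^{-1}=Q^{\top}M^{-1}P^{\top}$ shows that left-multiplication by $Q^{\top}$ permutes the rows of $M^{-1}$ and right-multiplication by $P^{\top}$ permutes its columns, so that the bottom-right $(n-k)\times(n-k)$ block of $N^{-1}$ is, up to an internal reordering of rows and columns, the submatrix $(M^{-1})_{J',I'}$. Applying the special case to $N$ and then converting back, each permutation contributes only a sign to $\det(N)$ and to the two relevant minors, and these signs collect into the overall $\pm$.

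The main obstacle is the index- and sign-bookkeeping in this reduction: one must check that the roles of $I$ and $J$ are correctly interchanged, namely that the complementary block of $N^{-1}$ corresponds to the minor of $M^{-1}$ indexed by $J'$ on rows and $I'$ on columns. Because the conclusion is only asserted up to sign, I would not compute the exact sign (which equals $(-1)^{\sum_{i\in I}i+\sum_{j\in J}j}$), keeping the argument short.
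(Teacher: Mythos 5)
Your proof is correct. Note, however, that the paper does not prove this lemma at all: it is quoted verbatim as a known result, with a citation to page 21 of Horn and Johnson's \emph{Matrix Analysis}, so there is no in-paper argument to compare against. Your write-up is a genuine, self-contained proof of that cited fact (Jacobi's identity on complementary minors, up to sign). The two steps are sound: in the leading-principal case, the factorization
\[
\begin{pmatrix} A & B \\ C & D\end{pmatrix}\begin{pmatrix} I_k & X \\ O & Z\end{pmatrix}=\begin{pmatrix} A & O \\ C & I_{n-k}\end{pmatrix}
\]
together with the block-triangular determinant rule gives $\det(A)=\det(M)\det(Z)$ exactly, and, as you say, this needs no invertibility of the minor $A$ and no continuity or density argument; the reduction of general $I,J$ to this case by row and column permutations is also handled correctly, since $N^{-1}=Q^{\top}M^{-1}P^{\top}$ does place (a reordering of) $(M^{-1})_{J',I'}$ in the complementary block, and the statement's $\pm$ absorbs all permutation signs. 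Your parenthetical sign $(-1)^{\sum_{i\in I}i+\sum_{j\in J}j}$ is indeed the standard one, though, as you note, it is not needed here. The only thing your approach ``costs'' relative to the paper is length; what it buys is that the paper's reliance on an external reference is replaced by a short elementary argument.
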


Now we record a result about the determinant of an EW tournament matrix.
It is a corollary of \cite[Theorem 1.1]{GS}, 

\begin{lemma}\label{lem:snfa1}
	Let $A$ be an EW tournament matrix of order $4t+1$.
	Then $\det(A)=t^{2t}(4t-1)$. 
\end{lemma}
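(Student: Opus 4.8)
The plan is to compute $\det(A)$ directly from the defining relations of an EW tournament matrix and to exploit the skew-symmetric structure encoded in $A + A^\top = J - I$. The cleanest route is to relate $\det(A)^2$ to the known Gram matrix $AA^\top$ given in \eqref{eq:ew1}, but since $AA^\top$ determines $\det(A)$ only up to sign, I will need a separate argument to pin down both the magnitude and the sign. Concretely, first I would compute $\det(AA^\top)$ from \eqref{eq:ew1}. The matrix $t(I+J)$ plus the block perturbation has a highly structured form, and its determinant should factor nicely; I expect $\det(AA^\top) = t^{4t}(4t-1)^2$, which immediately gives $|\det(A)| = t^{2t}(4t-1)$.

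The remaining issue is the sign. Here I would invoke the skew-symmetric structure: writing $A = B + N$ where $N = (J-I)/2$ is (up to scaling) the constant skew-type part and $B = (A - A^\top)/2$ is genuinely skew-symmetric, one can try to argue that the sign is forced to be positive. A cleaner approach is to use the correspondence \eqref{eq:s} together with Lemma~\ref{eq:bm}(ii): we already know $\det(A+I) = t^{2t}(4t+1)$, and the relation $A + A^\top = J - I$ lets one compute $\det(A + xI)$ as a polynomial in $x$ by diagonalising simultaneously the circulant-like pieces. Evaluating this characteristic-type polynomial at $x = 0$ and comparing with its value at $x = 1$ would transfer the known sign at $x=1$ to the desired value at $x = 0$.

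In more detail, the eigenvalue structure of $A$ is constrained: since $A + A^\top = J - I$, for any eigenvector $v$ orthogonal to $\mathbf 1$ we have $(A + A^\top)v = -v$, so the symmetric part of $A$ acts as $-\tfrac12 I$ on $\mathbf 1^\perp$ and the eigenvalues of $A$ on that subspace are of the form $-\tfrac12 \pm i\,\mu$ for real $\mu$ determined by the skew part. The block data in \eqref{eq:ew1}–\eqref{eq:ew2} pin down the values of $\mu$, so $\det(A+xI)$ over $\mathbf 1^\perp$ is a product of factors $(x - \tfrac12)^2 + \mu^2$, which is manifestly positive for real $x$; the remaining factor comes from the $\mathbf 1$-eigenspace, where $A\mathbf 1$ is computed directly from the row sums. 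Tracking these factors should yield $\det(A + xI)$ explicitly, and setting $x = 0$ gives the result with the correct sign.

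The main obstacle will be the sign determination rather than the magnitude: the Gram-matrix computation gives $|\det(A)|$ almost for free, but ensuring the sign is $+$ requires either the eigenvalue-product argument above (which needs one to verify that the quadratic factors are genuinely positive and that the $\mathbf 1$-eigenvalue contributes the expected sign) or a careful continuity/interpolation argument anchored at the known value $\det(A+I) = t^{2t}(4t+1)$ from Lemma~\ref{eq:bm}. Citing \cite[Theorem~1.1]{GS} as the excerpt suggests, I would present this as a corollary, carrying out the block-determinant evaluation of $AA^\top$ for the magnitude and deducing the sign from the spectral structure forced by $A + A^\top = J - I$.
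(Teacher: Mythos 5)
The central step of your plan---computing $\det(AA^\top)$ from \eqref{eq:ew1} and ``expecting'' it to equal $t^{4t}(4t-1)^2$---does not go through, and this is a genuine gap rather than a technicality. The right-hand side of \eqref{eq:ew1} involves the unknown block size $a$, and its determinant depends on $a$: the paper's own evaluation of exactly this block determinant (equation \eqref{eq:eq4}, inside the proof of Theorem~\ref{thm:quadforma}) gives
\begin{equation*}
\det(AA^\top) = t^{4t-1}\left((3-4t)a^2+(8t^2-2t-3)a+t(12t^2-t-2)\right),
\end{equation*}
which equals $t^{4t}(4t-1)^2$ if and only if $a^2-(2t+1)a+t(t-1)=0$. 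But in this paper that quadratic constraint on $a$ is \emph{derived from} Lemma~\ref{lem:snfa1}: Theorem~\ref{thm:quadforma} assumes the lemma and equates the two expressions for $\det(AA^\top)$. The definition of an EW tournament matrix places no a priori restriction on $a$ beyond $0\leqslant a\leqslant 2t+1$, so your step of reading off the magnitude from the Gram matrix is circular: the ``nice factorization'' you expect is logically equivalent to the statement being proved. The paper avoids this by not proving the lemma internally at all---it is quoted as a corollary of \cite[Theorem 1.1]{GS}, and that external input is precisely what breaks the circle. A self-contained proof would need an independent determination of $a$ (or of $\det(A)$), which your sketch does not supply.

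The sign discussion has problems too, though they are secondary. Your claim that ``the eigenvalues of $A$ on $\mathbf{1}^\perp$ are of the form $-\tfrac12\pm i\mu$'' presupposes that $\mathbf{1}^\perp$ is $A$-invariant, which is false here: the column sums of $A$ are not constant (they can be read off the diagonal blocks of \eqref{eq:ew2}), so $A$ does not restrict to an operator on $\mathbf{1}^\perp$. What is true is only that an eigenvalue whose eigenvector happens to be orthogonal to $\mathbf{1}$ has real part $-\tfrac12$; nothing forces the spectrum to split as you describe. Moreover, comparing \eqref{eq:ew1} with \eqref{eq:ew2} shows $AA^\top\neq A^\top A$, so $A$ is not normal and the block data pin down singular values, not eigenvalues. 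Likewise, transferring the sign from $\det(A+I)=t^{2t}(4t+1)$ at $x=1$ down to $x=0$ requires knowing that $A$ has no real eigenvalue in $[-1,0]$, which you do not establish (the classical bound for tournament matrices only gives real part $\geqslant-\tfrac12$, leaving $[-\tfrac12,0)$ unexcluded); it also uses Lemma~\ref{eq:bm} in the direction from a given tournament matrix $A$ to a skew-symmetric EW matrix $S$, a correspondence your sketch takes for granted. So both proposed sign arguments are incomplete, on top of the circularity in the magnitude computation.
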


Since skew-symmetric EW matrices are in correspondence with EW tournament matrices, the following result implies Lemma~\ref{lem:o}.
We will use Theorem~\ref{thm:quadforma} in Section~\ref{sec:snftournament}.

\begin{theorem}
	\label{thm:quadforma}
	Let $A$ be an EW tournament matrix of order $4t+1$.
	Then the integer $a$ in \eqref{eq:ew1} satisfies the quadratic equation $a^2-(2t+1)a+t(t-1)=0$.
\end{theorem}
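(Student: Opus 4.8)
The plan is to extract the equation for $a$ by computing $\det(AA^\top)$ in two different ways. On one hand, Lemma~\ref{lem:snfa1} gives $\det(A) = t^{2t}(4t-1)$, so that $\det(AA^\top) = t^{4t}(4t-1)^2$. On the other hand, \eqref{eq:ew1} expresses $AA^\top$ as $tI_{4t+1}$ plus a matrix each of whose blocks, with respect to the partition into parts of sizes $t,t,a,2t+1-a$, is a scalar multiple of an all-ones matrix. The idea is to exploit this low block-rank structure to reduce the $(4t+1)\times(4t+1)$ determinant to a $4\times 4$ determinant whose entries are polynomials in $t$ and $a$, and then to equate the two expressions.

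Concretely, let $U$ be the $(4t+1)\times 4$ matrix whose $I$-th column is the indicator vector of the $I$-th part, so that $U^\top U = D := \operatorname{diag}[t,t,a,2t+1-a]$ and $\mathbf{1} = U\mathbf{1}_4$. Writing each all-ones block as $\mathbf{1}\mathbf{1}^\top$ on the appropriate parts, \eqref{eq:ew1} takes the form $AA^\top = tI_{4t+1} + UWU^\top$, where $W$ is the $4\times 4$ matrix recording the block values of the perturbation together with the contribution $t\mathbf{1}_4\mathbf{1}_4^\top$ coming from the $tJ$ term. Applying the Weinstein--Aronszajn identity $\det(I + XY) = \det(I + YX)$ with $X=U$ and $Y=\tfrac1t WU^\top$ then gives
\begin{align*}
\det(AA^\top) = t^{4t+1}\det\!\left(I_4 + \tfrac1t W U^\top U\right) = t^{4t-3}\det(tI_4 + WD),
\end{align*}
using $U^\top U = D$. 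Since $a$ enters $D$ only through its third and fourth diagonal entries $a$ and $2t+1-a$, and the determinant is multilinear in the columns, $\det(tI_4+WD)$ is a polynomial of degree at most two in $a$.

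Equating the two expressions yields $\det(tI_4 + WD) = t^3(4t-1)^2$, a quadratic equation in $a$. The remaining step is to carry out the $4\times 4$ determinant, simplify, and check that after clearing common factors it reduces exactly to $a^2 - (2t+1)a + t(t-1) = 0$; as a sanity check, its discriminant is $8t+1$, consistent with Lemma~\ref{lem:o}. One can cross-verify the whole computation using \eqref{eq:AAt} together with $\det(A+I) = t^{2t}(4t+1)$ from Lemma~\ref{eq:bm}, which gives $\det(tI_4 + W'D) = t^3(4t+1)^2$ for the analogous matrix $W'$ and must yield the same quadratic. The main obstacle is this final determinant evaluation: because the rank-one term $t\mathbf{1}_4\mathbf{1}_4^\top$ inflates the entries of $W$ to size $\Theta(t)$, the $4\times 4$ determinant has spuriously high degree in $t$, so substantial cancellation must occur before the claimed factorisation $t^3(4t-1)^2$ emerges. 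Organising the computation, for instance by first performing row and column operations that strip off the rank-one part, is what makes the argument go through and forces the leading and lower coefficients into the ratio that produces the monic quadratic.
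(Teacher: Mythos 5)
Your strategy is sound and coincides with the paper's at the top level---both compute $\det(AA^\top)$ twice, once as $t^{4t}(4t-1)^2$ via Lemma~\ref{lem:snfa1} and multiplicativity, and once from the block structure \eqref{eq:ew1}---but your method for the second evaluation is genuinely different. The paper grinds down the $(4t+1)\times(4t+1)$ determinant by an explicit chain of elementary row and column operations, extracting powers of $t$ at each stage, and finishes with the Schur-complement identity \eqref{eq:det2x2} to reach the explicit polynomial \eqref{eq:eq4}. You instead observe that $AA^\top=tI+UWU^\top$ with $U$ the indicator matrix of the partition $(t,t,a,2t+1-a)$, and use $\det(I+XY)=\det(I+YX)$ to collapse everything to $\det(AA^\top)=t^{4t-3}\det(tI_4+WD)$, where $D=U^\top U=\operatorname{diag}[t,t,a,2t+1-a]$. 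This is cleaner, harder to get wrong, and makes the dependence on $a$ (at most quadratic) transparent; what the paper's approach buys is that the quadratic is actually derived in print rather than asserted. That points to the one real weakness of your write-up: you stop at ``carry out the $4\times 4$ determinant and check,'' which is exactly where the theorem's coefficients come from, so as written the decisive identity is claimed rather than proved. The check does succeed: expanding gives $\det(tI_4+WD)=t^2((3-4t)a^2+(8t^2-2t-3)a+t(12t^2-t-2))$, in agreement with \eqref{eq:eq4}, and equating this with $t^3(4t-1)^2$ yields $(4t-3)(a^2-(2t+1)a+t(t-1))=0$, whence the theorem since $4t-3\neq 0$. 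Your sanity checks are valid: the discriminant $(2t+1)^2-4t(t-1)=8t+1$ is consistent with Lemma~\ref{lem:o}, and the parallel computation from \eqref{eq:AAt} using $\det(A+I)=t^{2t}(4t+1)$ from Lemma~\ref{eq:bm} gives an independent confirmation. One phrasing slip worth fixing: $t^3(4t-1)^2$ is not a factorisation that ``emerges'' from the determinant itself---$\det(tI_4+WD)$ is $t^2$ times a quadratic in $a$---rather, it is the equation obtained by equating the two evaluations of $\det(AA^\top)$ that factors.
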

\begin{proof}
	Begin with \eqref{eq:ew1}, which gives an expression for $AA^\top$.
Using elementary row operations and properties of the determinant, we find that
\begin{align}
\det( AA^\top) &=\begin{pmatrix}
t I_t+(t-1)J_t & (t-1)J  & (t-1)J  &(t-1)J\\
(t-1)J & t I_t+(t+1)J_t  & t J  & t J \\
(t-1)J & t J & t I_a+t J_{a}  & (t-1)J\\
(t-1)J & t J & (t-1)J  & t I_{2t+1-a}+t J_{2t+1-a}
\end{pmatrix}\displaybreak[0]\nonumber\\
&=\det \begin{pmatrix}
t^2 & (t-1){\bf 1}^\top & (t-1){\bf 1}^\top  & (t-1){\bf 1}^\top  & (t-1){\bf 1}^\top\\
{\bf 0}&t I_{t-1} & O  & O  & O \\
-t{\bf 1}& O & t I_t+2J_t  &  J  &  J \\
-t{\bf 1}& O & J & t I_a+J_{a}  & O\\
-t{\bf 1}& O & J & O  & t I_{2t+1-a}+J_{2t+1-a}
\end{pmatrix}\displaybreak[0]\nonumber\\
&=t^{t-1}\det \begin{pmatrix}
t^2 & (t-1){\bf 1}^\top  & (t-1){\bf 1}^\top  & (t-1){\bf 1}^\top\\
-t{\bf 1}&  t I_t+2J_t  &  J  &  J \\
-t{\bf 1}&  J & t I_a+J_{a}  & O\\
-t{\bf 1}&  J & O  & t I_{2t+1-a}+J_{2t+1-a}
\end{pmatrix}\displaybreak[0]\nonumber\\
&=t^{t}\det \begin{pmatrix}
t & (t-1){\bf 1}^\top  & (t-1){\bf 1}^\top  & (t-1){\bf 1}^\top \\
-{\bf 1}&  t I_t+2J_t  &  J  &  J \\
-{\bf 1}&  J & t I_a+J_{a}  & O\\
-{\bf 1}&  J & O  & t I_{2t+1-a}+J_{2t+1-a}
\end{pmatrix}\displaybreak[0]\nonumber\\
&=t^{t}\det \begin{pmatrix}
t & (3t-1){\bf 1}^\top & (2t-1){\bf 1}^\top & (2t-1){\bf 1}^\top\\
-{\bf 1}&  t I_t  &  O  &  O \\
-{\bf 1}&  -J & t I_a  & -J \\
-{\bf 1}&  -J & -J & t I_{2t+1-a}
\end{pmatrix}\displaybreak[0]\nonumber\\
&=t^{t}\det \begin{pmatrix}
4t-1 & (3t-1){\bf 1}^\top & (2t-1){\bf 1}^\top & (2t-1){\bf 1}^\top \\
{\bf 0}&  t I_t  &  O  &  O \\
(-2){\bf 1}&  -J & t I_a  & -J \\
(-2){\bf 1}&  -J & -J & t I_{2t+1-a}
\end{pmatrix}\displaybreak[0]\nonumber\\
&=t^{2t}\det \begin{pmatrix}
4t-1 &  (2t-1){\bf 1}^\top & (2t-1){\bf 1}^\top \\
(-2){\bf 1}&  t I_a  & -J \\
(-2){\bf 1}&  -J & t I_{2t+1-a}
\end{pmatrix}\displaybreak[0]\nonumber\\
&=t^{2t}(4t-1)\det \left( \begin{pmatrix}
 t I_a  & -J \\
  -J & t I_{2t+1-a}
\end{pmatrix}+\frac{2(2t-1)}{4t-1}J\right)\nonumber\\
&=t^{2t}(4t-1)\det\begin{pmatrix}
 t I_a+\frac{2(2t-1)}{4t-1}J  & -\frac{1}{4t-1}J \\
  -\frac{1}{4t-1}J & t I_{2t+1-a}+\frac{2(2t-1)}{4t-1}J
\end{pmatrix}.\label{eq:ew3}
\end{align}
Next, we use the following general equation obtained by the Schur complement \cite[(0.8.5.1)]{HJ}:
\begin{align}\label{eq:det2x2}
\det\begin{pmatrix}
 \alpha I_a+\beta J_a  & \gamma J \\
  \gamma J & \alpha I_b+\beta J_b
\end{pmatrix}=\alpha^{a+b-2}\left( \alpha^2+(a+b)\alpha\beta+ab\beta^2-ab\gamma^2 \right). 
\end{align}
Then, by \eqref{eq:ew3} and \eqref{eq:det2x2}, we have 
\begin{align}\label{eq:eq4}
\det( AA^\top) = t^{4t-1}\left((3-4t)a^2+(8t^2-2t-3)a+t(12t^2-t-2) \right).
\end{align}
By Lemma~\ref{lem:snfa1}, we have $\det(A)=t^{2t}(4t-1)$.
Therefore, using the multiplicativity of the determinant we have
\[
\det( AA^\top) = t^{4t}(4t-1)^2. 
\]
Therefore we can simplify \eqref{eq:eq4} to obtain
\begin{align*}
a^2-(2t+1)a+t(t-1)=0,
\end{align*}
as required. 
\end{proof}

\section{The $(2t+2)$-nd invariant factor of skew-symmetric EW matrices}\label{sec:2t2}
In this section, we provide proof of part (i) of Lemma~\ref{lem:m1}. 
For a prime $p$ and an integer matrix $M$, let $\rank_p(M)$ denote the rank of $M$ over the field of integers of modulo $p$.  
The following was shown in \cite{A}. 
\begin{lemma}{\rm \cite[Lemma~2,4]{A}}
	\label{lem:rankub}
Let $A$ be an EW tournament matrix of order $4t+1$ and let $p$ be a prime that divides $t$.
Then $2t\leqslant \rank_p(A+I)\leqslant 2t+1$. 
\end{lemma}




 Next we provide a strengthening of the above lemma, which is a key lemma to determine the invariant factor $s_{2t+2}$.

\begin{lemma}\label{lem:prank}
	Let $A$ be an EW tournament matrix of order $4t+1$ and let $p$ be a prime that divides $t$.
	Then $\rank_p(A+I) = 2t+1$.
\end{lemma}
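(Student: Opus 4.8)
The plan is to bound the dimension of the kernel of $A+I$ over $\mathbb{F}_p$. Write $B=A+I$ and let $K=\ker(B)\subseteq\mathbb{F}_p^{4t+1}$ be its right null space over $\mathbb{F}_p$, so that $\rank_p(B)=4t+1-\dim K$. Since Lemma~\ref{lem:rankub} already gives $\rank_p(B)\leqslant 2t+1$, it will suffice to prove the single inequality $\dim K\leqslant 2t$. My strategy is to show that $K$ is a totally isotropic subspace for a non-degenerate quadratic form on the odd-dimensional space $\mathbb{F}_p^{4t+1}$, because such a subspace cannot have dimension exceeding $2t$.

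I would produce this form from the tournament identity $A+A^\top=J-I$, which holds over $\mathbb{F}_p$. For every $x\in\mathbb{F}_p^{4t+1}$ the scalar $x^\top A x$ equals its own transpose $x^\top A^\top x$, so $2\,x^\top A x=x^\top(A+A^\top)x=(\mathbf{1}^\top x)^2-x^\top x$. If $x\in K$ then $Bx=0$ forces $Ax=-x$, whence $x^\top A x=-x^\top x$; substituting yields $x^\top(I+J)x=0$ for all $x\in K$. For odd $p$ this exhibits $K$ as a totally isotropic subspace of the symmetric bilinear form with Gram matrix $I+J$ (polarising, using $\operatorname{char}\mathbb{F}_p\neq 2$). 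Since $\det(I+J)=4t+2\equiv 2\not\equiv 0\pmod p$, this form is non-degenerate, so $K\subseteq K^\perp$ with $\dim K+\dim K^\perp=4t+1$; hence $2\dim K\leqslant 4t+1$ and $\dim K\leqslant 2t$, as wanted.

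The case $p=2$ is where I expect the real difficulty, because there the bilinear identity degenerates: $2\,x^\top A x=0$ holds automatically and $I+J$ becomes singular with radical $\langle\mathbf{1}\rangle$. To recover the argument I would replace the bilinear form by the genuine quadratic form $g(x)=\sum_{i\leqslant j}x_i x_j$ over $\mathbb{F}_2$, whose polarisation is again $I+J$. Using $A_{ii}=0$ and $A_{ij}+A_{ji}=1$ one gets $x^\top A x=\sum_{i<j}x_i x_j$ over $\mathbb{F}_2$, and the relation $Ax=x$ on $K$ then forces $g(x)=0$ for all $x\in K$, so $K$ is totally singular for $g$. A direct count gives $g(\mathbf{1})=\binom{4t+2}{2}=(2t+1)(4t+1)\equiv 1\pmod 2$, so $\mathbf{1}\notin K$ and $K$ meets the radical $\langle\mathbf{1}\rangle$ trivially; projecting modulo $\langle\mathbf{1}\rangle$ then embeds $K$ as a totally isotropic subspace of the non-degenerate alternating form induced on $\mathbb{F}_2^{4t+1}/\langle\mathbf{1}\rangle\cong\mathbb{F}_2^{4t}$, and the isotropic bound for a $4t$-dimensional symplectic space again yields $\dim K\leqslant 2t$. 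In both cases, combining with Lemma~\ref{lem:rankub} gives $\rank_p(A+I)=2t+1$; the points needing careful checking are the non-degeneracy bookkeeping in characteristic two and the evaluation $g(\mathbf{1})\equiv 1$.
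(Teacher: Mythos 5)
Your proof is correct, and it takes a genuinely different route from the paper's. The paper obtains the lower bound $\rank_p(A+I)\geqslant 2t+1$ by a row-space argument: writing $J+I=(A+I)+(A^\top+I)$, it deduces $\rank_p(A+I)\geqslant 2t+d/2$ where $d=\dim(\mathrm{rowsp}_p(A+I)\cap \mathrm{rowsp}_p(A^\top+I))$, and then uses the EW equations \eqref{eq:AAt} and \eqref{eq:AtA} reduced modulo $p$ to exhibit the explicit vector $({\bf 0}^\top_t,{\bf 0}^\top_t,{\bf 1}^\top_{a},-{\bf 1}^\top_{2t+1-a})$ lying in both row spaces, giving $d\geqslant 1$; the upper bound is quoted from Lemma~\ref{lem:rankub} (and re-derived via Sylvester's inequality), exactly as you do. You instead bound the kernel: vectors of $\ker_p(A+I)$ are isotropic for the form with Gram matrix $I+J$, so the kernel is totally isotropic and has dimension at most $2t$, using non-degeneracy ($\det(I+J)=4t+2\equiv 2$) for odd $p$, and for $p=2$ passing to the symplectic quotient modulo the radical $\langle{\bf 1}\rangle$ after checking $g({\bf 1})=(2t+1)(4t+1)\equiv 1$. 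The notable difference is what each argument consumes: your lower bound uses only the tournament identity $A+A^\top=J-I$ and never the EW equations, so it actually proves the stronger statement that $\rank_p(T+I)\geqslant 2t+1$ for \emph{every} tournament matrix $T$ of order $4t+1$ and every prime $p$ not dividing $2t+1$; the EW structure enters your proof only through the upper bound of Lemma~\ref{lem:rankub}. The paper's proof, by contrast, is tied to the EW equations \eqref{eq:AAt} and \eqref{eq:AtA}, but it is purely rank-theoretic, uniform in $p$, and avoids the characteristic-two bookkeeping (the quadratic form $g$, the radical, and the evaluation $g({\bf 1})\equiv 1$) that your argument requires.
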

\begin{proof}
First we show that $\rank_p(A+I) \geqslant 2t+1$.
Let $d = \dim( \mathrm{rowsp}_p(A+I)\cap \mathrm{rowsp}_p(A^\top+I) )$ be the dimension of the intersection of row spaces over the field of integers of modulo $p$.
Then
\begin{align*}
4t&=\rank_p(J+I)=\rank_p((A+I)+(A^\top+I))\\
&\leqslant \rank_p(A+I)+\rank_p(A^\top+I)-d\\
&=2\rank_p(A+I)-d, 
\end{align*}
that is, we have that $2t+d/2\leqslant \rank_p(A+I)$. 
Hence it suffices to show that $d\geqslant 1$.

Using the assumption that $p$ divides $t$ together with \eqref{eq:AAt} and \eqref{eq:AtA}, it follows that
\begin{align*}
\mathrm{span}\{({\bf 0}^\top_t,{\bf 1}^\top_t,{\bf 1}^\top_{a},{\bf 0}^\top_{2t+1-a}),({\bf 0}^\top_t,{\bf 1}^\top_t,{\bf 0}^\top_{a},{\bf 1}^\top_{2t+1-a})\} &\subseteq \mathrm{rowsp}_p(A+I), \\
\mathrm{span}\{({\bf 1}^\top_t,{\bf 0}^\top_t,{\bf 1}^\top_{a},{\bf 0}^\top_{2t+1-a}),({\bf 1}^\top_t,{\bf 0}^\top_t,{\bf 0}^\top_{a},{\bf 1}^\top_{2t+1-a})\} &\subseteq \mathrm{rowsp}_p(A^\top+I).
\end{align*}
Therefore 
\begin{align*}
\mathrm{span}\{({\bf 0}^\top_t,{\bf 0}^\top_t,{\bf 1}^\top_{a},-{\bf 1}^\top_{2t+1-a})\} \subseteq \mathrm{rowsp}_p(A+I)\cap \mathrm{rowsp}_p(A^\top+I), 
\end{align*}
and thus $d \geqslant 1$.

The inequality $\rank_p(A+I) \leqslant 2t+1$ follows from Lemma~\ref{lem:rankub}.
However, for the sake of completeness, we provide a proof. 
Combine \eqref{eq:AAt} and Sylvester's rank inequality \cite[0.4.5 Rank inequalities (c)]{HJ} to obtain
\begin{align*}
    2\rank_p(A+I)&=\rank_p(A+I)+\rank_p(A^\top+I)\\
    &\leqslant \rank_p((A+I)(A^\top+I))+4t+1\\
    &=4t+3.
\end{align*}
Thus we have $ \rank_p(A+I)\leqslant2t+1$.
\end{proof}

\begin{proposition}\label{prop:b2t1}
Let $A$ be an EW tournament matrix of order $4t+1$ and let $b_1,\ldots,b_{4t+1}$ be the invariant factors of $A+I$.
Then $b_{2t+1}=1$. 
\end{proposition}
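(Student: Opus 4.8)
The plan is to show that $b_{2t+1}=1$ by combining the rank information from Lemma~\ref{lem:prank} with the relationship between invariant factors and ranks modulo a prime. First I would recall the standard fact that for an integer matrix $M$ with invariant factors $m_1,\dots,m_n$, the $p$-adic rank $\rank_p(M)$ equals the number of invariant factors $m_i$ that are \emph{not} divisible by $p$. Equivalently, $\rank_p(M)=\#\{i : p\nmid m_i\}$. The idea is to apply this to $M=A+I$ with a suitable prime $p$.

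The key step is to choose a prime $p$ dividing $t$. Such a prime exists provided $t>1$; I would need to handle the case $t=1$ (equivalently $n=6$) separately, since then $t$ has no prime divisor and the argument via Lemma~\ref{lem:prank} does not directly apply. Assuming $p\mid t$, Lemma~\ref{lem:prank} gives $\rank_p(A+I)=2t+1$. By the fact above, this means exactly $2t+1$ of the invariant factors $b_1,\dots,b_{4t+1}$ are coprime to $p$. Since the invariant factors satisfy the divisibility chain $b_1\mid b_2\mid\cdots\mid b_{4t+1}$, the ones coprime to $p$ must be an initial segment: $b_1,\dots,b_{2t+1}$ are coprime to $p$, while $b_{2t+2},\dots,b_{4t+1}$ are divisible by $p$. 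In particular $p\nmid b_{2t+1}$.

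The point I would then exploit is that this conclusion holds for \emph{every} prime $p$ dividing $t$. From Lemma~\ref{eq:bm}(ii) we have $\det(A+I)=t^{2t}(4t+1)$, so $b_1\cdots b_{4t+1}=t^{2t}(4t+1)$; hence the only primes that can divide any $b_i$ are those dividing $t$ or dividing $4t+1$. For a prime $p\mid t$, we just saw $p\nmid b_{2t+1}$. For a prime $q\mid(4t+1)$, since $\gcd(t,4t+1)=1$, such $q$ does not divide $t$; I would argue that $q$ divides at most the top invariant factor, because in the product $t^{2t}(4t+1)$ the prime $q$ appears with multiplicity equal to its multiplicity in $4t+1$, and the divisibility chain forces all these $q$-factors into $b_{4t+1}$ and possibly a few below it — in any case one needs $q\nmid b_{2t+1}$. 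The cleanest way to secure this is to note that by Theorem~\ref{thm:a} and Lemma~\ref{eq:bm}(i) the factor $s_{2t+1}=2b_{2t}=2$, so $b_{2t}=1$, and since $b_{2t}\mid b_{2t+1}$ with $b_{2t+1}$ coprime to every prime dividing $t$, it remains only to rule out divisibility by primes of $4t+1$.

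The main obstacle I anticipate is precisely this last point: showing no prime dividing $4t+1$ can divide $b_{2t+1}$. Lemma~\ref{lem:prank} is stated only for primes dividing $t$, so it gives no direct control at primes dividing $4t+1$. I would resolve this by leaning on the already-established value $b_{2t}=1$ (coming from $s_{2t+1}=2$ in Theorem~\ref{thm:a}): since $b_{2t+1}$ is sandwiched and must multiply into the determinant $t^{2t}(4t+1)$, and since the $2t+1$ invariant factors coprime to each $p\mid t$ are exactly $b_1,\dots,b_{2t+1}$, a counting of $p$-adic valuations across all primes shows $b_{2t+1}$ can only be $1$. Concretely, summing $\nu_p(b_i)$ over $i$ must equal $2t\,\nu_p(t)$ for each $p\mid t$, and the constraint that $b_1,\dots,b_{2t+1}$ contribute nothing forces the entire $p$-part into $b_{2t+2},\dots,b_{4t+1}$; combined with $b_{2t}=1$ and the coprimality $\gcd(t,4t+1)=1$, this pins down $b_{2t+1}=1$.
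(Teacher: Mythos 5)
Your first half---the treatment of primes $p$ dividing $t$---is correct and is essentially the paper's own argument in contrapositive form: by Lemma~\ref{lem:prank}, $\rank_p(A+I)=2t+1$, and since $\rank_p(A+I)$ equals the number of invariant factors coprime to $p$, the divisibility chain gives $p\nmid b_{2t+1}$. The genuine gap is in the case you yourself flag as the main obstacle: primes $q$ dividing $4t+1$. Your proposed resolution never actually proves $q\nmid b_{2t+1}$; it restates it. The valuation count $\sum_i \nu_p(b_i)=2t\,\nu_p(t)$ runs only over primes $p\mid t$ and places no constraint whatsoever on $\nu_q(b_{2t+1})$; likewise $b_{2t}=1$ gives no upper bound on $b_{2t+1}$, and $\gcd(t,4t+1)=1$ only tells you that $q\nmid t$. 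None of these ingredients, separately or together, excludes $q\mid b_{2t+1}$.

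The missing idea (and the paper's actual argument) is an upward-propagation count at the prime $q$ itself: if $q\mid b_{2t+1}$, then by the chain $b_{2t+1}\mid b_{2t+2}\mid\cdots\mid b_{4t+1}$ the prime $q$ divides each of the $2t+1$ factors $b_{2t+1},\ldots,b_{4t+1}$, so $q^{2t+1}$ divides $b_1\cdots b_{4t+1}=\det(A+I)=t^{2t}(4t+1)$; since $q\nmid t$, this forces $q^{2t+1}\mid 4t+1$, which is absurd because $q\geqslant 3$ (as $4t+1$ is odd) gives $q^{2t+1}\geqslant 3^{2t+1}>4t+1$ for all $t\geqslant 1$. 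Note also that your worry about $t=1$ disappears if, as in the paper, you structure the whole proof as a contradiction on a prime dividing $b_{2t+1}$: such a prime must divide $t^{2t}(4t+1)$, hence divide $t$ or $4t+1$, and when $t=1$ the first case is simply vacuous.
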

\begin{proof}
This result follows from the argument\footnote{In Armario's proof, he derived ``$p^{2t+1}$ divides $4t+1$" from the assumption on $b_{2t}=1$. The result obtained from his argument is, in fact, that ``$p^{2t+2}$ divides $4t+1$".} in \cite[Proof of Theorem~1.2]{A} by replacing the role of $b_{2t}$ with $b_{2t+1}$. 
For the sake of the reader we provide a proof.  
 
Assume that there exists a prime $p$ dividing $b_{2t+1}$. It follows from Lemma~\ref{lem:minor} and Lemma~\ref{eq:bm} that $b_1\cdots b_{4t+1}=\det(A+I)=t^{2t}(4t+1)$. 
Thus $p$ is either 
\begin{enumerate}[(1)]
\item a divisor of $t$, or 
\item not a divisor of $t$ and a divisor of $4t+1$. 
\end{enumerate} 
For the case (1), Lemma~\ref{lem:prank} contradicts the fact that 
\begin{align*}
\rank_p(A+I)=\max\{i\mid \text{$p$ does not divide $b_i$}\}\leqslant 2t.
\end{align*} 
For the case (2), observe that $p$ is odd and hence $p \geqslant 3$.
Furthermore $p$ is a divisor of $b_i$ for $i=2t+1,\ldots,4t+1$. 
Thus $p^{2t+1}$ divides $4t+1$.
But this is impossible since $p^{2t+1}>4t+1$ for all $t\geqslant 1$. 
Therefore we must have $b_{2t+1}=1$. 
\end{proof}
Now we can prove part (i) of Lemma~\ref{lem:m1}.

\begin{proof}[Proof of $s_{2t+2}=2$]
By combining Lemma~\ref{eq:bm} and Proposition~\ref{prop:b2t1} we obtain $s_{2t+2}=2$. 
\end{proof}
\begin{remark}
The equality $b_{2t+1}=1$ implies that $b_1=\cdots=b_{2t}=1$. Thus it follows that $s_2=\cdots=s_{2t+1}=2$, which was already shown in \cite[Theorem~1.2]{A}. 
\end{remark}

\section{The $(4t+2)$-nd invariant factor of skew-symmetric EW matrices}\label{sec:4t2}
In this section, we will show $s_{4t+2}=2t(4t+1)$ for a skew-symmetric EW matrix of order $4t+2$. 

By Lemma~\ref{lem:detminor}, it is enough to calculate the entries of $\det(S)S^{-1}$ explicitly. 
By the equation \eqref{eq:2} it is shown that
\begin{align}\label{eq:1}
S^{-1}=S^\top \left(\frac{1}{4t}I-\frac{1}{4t(4t+1)}\begin{pmatrix}J & O \\ O & J \end{pmatrix}\right).
\end{align}
We use the following lemma.  
\begin{lemma}\label{lem:1}
Set $S=\left ( \begin{smallmatrix}S_{11} & S_{12} \\ S_{21} & S_{22} \end{smallmatrix} \right )$ where each $S_{ij}$ is a $(2t+1) \times (2t+1)$ matrix. 
Then $S_{11}J=S_{22}J=J$ and $S_{12}J=-S_{21}J=\pm\sqrt{8t+1}J$ hold. 
\end{lemma}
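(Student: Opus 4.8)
The plan is to exploit the fact that the two block vectors $v=\binom{\mathbf{1}}{\mathbf{0}}$ and $w=\binom{\mathbf{0}}{\mathbf{1}}$ in $\mathbb{R}^{4t+2}$ (with blocks $\mathbf{1}_{2t+1}$ and $\mathbf{0}_{2t+1}$) span the top eigenspace of $SS^\top$, and that this eigenspace is preserved by $S$ itself. Concretely, the diagonal block $(n-2)I+2J=4tI_{2t+1}+2J_{2t+1}$ has $\mathbf{1}_{2t+1}$ as an eigenvector with eigenvalue $4t+2(2t+1)=8t+2$, so by \eqref{eq:2} we get $SS^\top v=(8t+2)v$ and $SS^\top w=(8t+2)w$. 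Moreover $8t+2$ occurs with multiplicity one in each diagonal block, so the subspace $U=\operatorname{span}\{v,w\}$ is \emph{exactly} the $(8t+2)$-eigenspace of $SS^\top$.

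The key step will be to show that $S$ maps $U$ into itself. Since $S$ is skew-symmetric in the sense $S^\top=2I-S$, the two products $SS^\top=2S-S^2$ and $S^\top S=2S-S^2$ coincide, so $v$ and $w$ are also $(8t+2)$-eigenvectors of $S^\top S$. Hence for $u\in U$ one has $SS^\top(Su)=S(S^\top S\,u)=(8t+2)Su$, so $Su$ is again a $(8t+2)$-eigenvector of $SS^\top$ and therefore lies in $U$. Writing $Sv=\alpha v+\beta w$ and $Sw=\gamma v+\delta w$ and reading off the two blocks gives $S_{11}\mathbf{1}=\alpha\mathbf{1}$, $S_{21}\mathbf{1}=\beta\mathbf{1}$, $S_{12}\mathbf{1}=\gamma\mathbf{1}$, $S_{22}\mathbf{1}=\delta\mathbf{1}$, i.e. $S_{11}J=\alpha J$, $S_{21}J=\beta J$, $S_{12}J=\gamma J$, $S_{22}J=\delta J$. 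Thus every block has constant row sums, which is already the shape asserted in the lemma.

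It then remains to evaluate the four scalars, which I would do via inner products using that $\{v,w\}$ is orthogonal with $v^\top v=w^\top w=2t+1$. Since $S-I$ is skew-symmetric, $v^\top(S-I)v=0$ gives $\alpha(2t+1)=v^\top Sv=v^\top v=2t+1$, so $\alpha=1$, and likewise $\delta=1$; the skew-symmetry of the bilinear form $(x,y)\mapsto x^\top(S-I)y$ gives $v^\top(S-I)w=-w^\top(S-I)v$, i.e. $\gamma=-\beta$. Finally, $\|Sv\|^2=v^\top S^\top S\,v=(8t+2)(2t+1)$, while $\|Sv\|^2=(\alpha^2+\beta^2)(2t+1)=(1+\beta^2)(2t+1)$, forcing $\beta^2=8t+1$ and hence $\beta=\pm\sqrt{8t+1}$. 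Combining, $S_{11}J=S_{22}J=J$ and $S_{12}J=\gamma J=-\beta J=-S_{21}J=\pm\sqrt{8t+1}\,J$, as required. (By Lemma~\ref{lem:o} this quantity is in fact an odd integer, though that is not needed here.)

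I expect the only genuinely substantive point to be the invariance $SU\subseteq U$; everything after it is a two-dimensional linear-algebra computation. The cleanest justification of that invariance is the identity $SS^\top=S^\top S$, which follows immediately from $S^\top=2I-S$ and dispenses with any separate appeal to the $S^\top S$ half of the EW definition. The one small point worth spelling out is that the $(8t+2)$-eigenspace of $SS^\top$ is genuinely two-dimensional and equals $U$, so that ``$Su$ is a $(8t+2)$-eigenvector'' really does force $Su\in U$ rather than merely into some larger invariant space.
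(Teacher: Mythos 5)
Your proof is correct, and it takes a genuinely different route from the paper's. The paper's own proof is essentially a two-line citation: it quotes from the proof of Lemma~4.5 of \cite{NS} that the eigenvalues of $S$ are $1\pm\sqrt{-1}(4t-1)$ and $1\pm\sqrt{-8t-1}$, and that the eigenvectors for the latter pair have the explicit block form $\bigl(\pm\sqrt{-1}\,\mathbf{1}^\top,\ \mathbf{1}^\top\bigr)^\top$; separating real and imaginary parts of the eigenvalue equation then gives all four block row sums at once. You instead work entirely over the reals and need only the identity $SS^\top=S^\top S$, which, as you rightly note, follows from $S^\top=2I-S$ alone without invoking the $B^\top B$ half of \eqref{eq:ew}: you identify $U=\operatorname{span}\{v,w\}$ as exactly the $(8t+2)$-eigenspace of $SS^\top$ via \eqref{eq:2} (the multiplicity count, one copy of $8t+2$ per diagonal block $4tI+2J$, is correct and is genuinely needed to force $Su\in U$), deduce $SU\subseteq U$ by normality, and then pin down the matrix of $S|_U$ using skew-symmetry ($\alpha=\delta=1$, $\gamma=-\beta$) and the norm computation $\|Sv\|^2=(8t+2)(2t+1)$, which forces $\beta^2=8t+1$. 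Every step checks out. What your approach buys is self-containedness: no complex eigenvectors and no dependence on \cite{NS}. What the paper's approach buys is brevity, since the spectral data is already in the literature. The two are, in fact, two faces of the same phenomenon: in the basis $\{v,w\}$ your restriction $S|_U$ has matrix $\left(\begin{smallmatrix}1 & -\beta\\ \beta & 1\end{smallmatrix}\right)$, whose eigenvalues are $1\pm\sqrt{-1}\,\beta=1\pm\sqrt{-8t-1}$ --- precisely the complex eigenvalues and eigenvectors the paper imports from \cite{NS}.
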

\begin{proof}
The eigenvalues of $S$ are $1\pm\sqrt{-1}(4t-1),1\pm\sqrt{-8t-1}$, and the eigenvectors corresponding to $1\pm\sqrt{-8t-1}$ are 
$\begin{pmatrix}\pm\sqrt{-1}\bm{1} \\ \bm{1} \end{pmatrix}$ or $\begin{pmatrix}\mp\sqrt{-1}\bm{1} \\ \bm{1} \end{pmatrix}$, see the proof of \cite[Lemma~4.5]{NS}.
The result now follows from these eigenvalues and eigenvectors. 
\end{proof}
Using Lemma~\ref{lem:1} together with the equation $S+S^\top=2I$, we calculate the right hand side of the equation \eqref{eq:1} as follows:
\begin{align*}
S^{-1}&=(2I-S)\left(\frac{1}{4t}I-\frac{1}{4t(4t+1)}\begin{pmatrix}J & O \\ O & J \end{pmatrix}\right).\\
&=\frac{1}{4t}(2I-S)-\frac{1}{4t(4t+1)}\begin{pmatrix}(2I-S_{11})J & -S_{12}J \\ -S_{21}J & (2I-S_{22})J \end{pmatrix}\\
&=\frac{1}{4t}(2I-S)-\frac{1}{4t(4t+1)}\begin{pmatrix}J & \mp\sqrt{8t+1}J \\ \pm\sqrt{8t+1}J & J \end{pmatrix}\\
&=\frac{1}{4t(4t+1)}\left((4t+1)(2I-S)-\begin{pmatrix}J & \mp\sqrt{8t+1}J \\ \pm\sqrt{8t+1}J & J \end{pmatrix}\right).
\end{align*}
By $\det(S)=(8t+2)(4t)^{2t}$, we obtain
\begin{align*}
\det(S) S^{-1}=2(4t)^{2t-1}\left((4t+1)(2I-S)-\begin{pmatrix}J & \mp\sqrt{8t+1}J \\ \pm\sqrt{8t+1}J & J \end{pmatrix}\right).
\end{align*}

Thus the entries of $\det(S)S^{-1}$ are, up to sign,   
\begin{align*}
2(4t)^{2t-1}\times 4t,\quad2(4t)^{2t-1}\times (4t+2),\quad 2(4t)^{2t-1}\times (4t+1\pm \sqrt{8t+1}). 
\end{align*}
By Lemma~\ref{lem:o}, the expression $\sqrt{8t+1}$ is an odd integer.
Hence $4t,4t+2$, and $4t+1\pm\sqrt{8t+1}$ are all even. 
Moreover, the greatest common divisor among $4t,4t+2,4t+1\pm\sqrt{8t+1}$ is two. 
By Lemma~\ref{lem:minor}, 
we have $d_{4t+1}(S)=4(4t)^{2t-1}$.  
Combining Lemma~\ref{lem:minor} and the equalities $d_{4t+2}(S)=\det(S)=(8t+2)(4t)^{2t}$, yields $s_{4t+2}=d_{4t+2}(S)/d_{4t+1}(S)=2t(4t+1)$. 

\section{Examples}\label{sec:ex}
In this section we provide a couple of examples of EW matrices that have Smith normal forms different from Theorem~\ref{thm:main}.  
\begin{example}\cite[Example 1]{KMS}\label{ex:1}
There exists an EW matrix of order $26$ with the Smith normal form different from Theorem~\ref{thm:main}.  
Let $R$ be the circulant $13 \times 13$ $\{1,-1\}$-matrix with
first row
\begin{align*}
    (1, 1, 1, 1, -1, 1, -1, -1, 1, 1, 1, -1, 1).
\end{align*}
Note that $R$ satisfies that $RR^\top=12I+J$.  
Then $X=\begin{pmatrix}
R & R\\
-R^\top & R^\top
\end{pmatrix}$
is an EW matrix of order $26$, and its Smith normal form is 
\begin{align*}
    \operatorname{diag}[1,\underbrace{2,\dots,2}_{13},\underbrace{12,\dots,12}_{10},60,60].
\end{align*}
Therefore, by Theorem~\ref{thm:main}, the EW matrix $X$ is not equivalent to a skew-symmetric EW matrix of order $26$. 
For a result on the Smith normal form of an EW matrix of order $26$ that have a certain block structure, see Section~\ref{appendix:b}.
\end{example}

\begin{example}\label{ex:2}
The following construction is due to Kharaghani \cite{K}. 
Let $A$ be the $11\times 11$ circulant $\{0,1,-1\}$-matrix with the first row $$(0, -1, 1, -1, -1, -1, 1, 1, 1, -1, 1).$$ 
Note that $A$ is skew-symmetric and $AA^\top=11I-J$. 
Then $X=\begin{pmatrix}
R_1 & R_2\\
-R_2^\top & R_1^\top
\end{pmatrix}$
is an EW matrix of order $66$, where 
\begin{align*}
    R_1&=(A+I_{11})\otimes (J_3-I_3)+(J_{11}-2I_{11})\otimes I_3,\\
    R_2&=(A+I_{11})\otimes (J_3-I_3)+(-A+I_{11})\otimes I_3, 
\end{align*}
and its Smith normal form is
\begin{align*}
    \operatorname{diag}[1,\underbrace{2,\dots,2}_{31},\underbrace{8,\dots,8}_{4},\underbrace{32,\dots,32}_{29},2080].
\end{align*}
Therefore by Theorem~\ref{thm:main}, $X$ is not equivalent to a skew-symmetric EW matrix of order $66$. 
Note that this EW matrix is a counterexample to \cite[Corollary 2]{KMS}.  
We provide a revised result of \cite[Corollary 2]{KMS} in Section~\ref{appendix:a}. 
\end{example}

\section{The Smith normal form of an EW matrix of order $4t+2$ where $4t+1$ is square-free} \label{appendix:a}
Here we provide a revised result of \cite[Corollary 2]{KMS}. 
\begin{lemma}\label{lem:a1}
Let $X$ be an EW matrix of order $4t+2$. 
Let $x_1,\ldots,x_{4t+2}$ be the invariant factors of $X$. 
Then $x_1=1$ and $x_2=2$. 
\end{lemma}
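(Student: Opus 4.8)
The plan is to prove that the first invariant factor $x_1$ equals $1$ and the second invariant factor $x_2$ equals $2$ for an arbitrary EW matrix $X$ of order $4t+2$, not necessarily skew-symmetric. First I would compute $x_1$. Since $X$ is a $\{1,-1\}$-matrix, it contains entries equal to $\pm 1$, so the greatest common divisor $d_1(X)$ of all $1\times 1$ minors is $1$. By Lemma~\ref{lem:minor}, $x_1 = d_1(X)/d_0(X) = 1$, which disposes of the first claim immediately.

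The substance lies in showing $x_2 = 2$, equivalently $d_2(X) = 2$ by Lemma~\ref{lem:minor} (since $x_2 = d_2(X)/d_1(X) = d_2(X)$). I would split this into showing $2 \mid d_2(X)$ and $d_2(X) \mid 2$. For the divisibility $2 \mid d_2(X)$, the key observation is that every $2\times 2$ minor of a $\{1,-1\}$-matrix is even: a $2\times 2$ determinant $ad - bc$ with all entries in $\{1,-1\}$ is a difference of two terms each equal to $\pm 1$, hence even. Therefore $2$ divides every $2\times 2$ minor, so $2 \mid d_2(X)$. For the reverse divisibility $d_2(X)\mid 2$, it suffices to exhibit a single $2\times 2$ minor equal to $\pm 2$. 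Because $X$ is an EW matrix rather than a Hadamard matrix, its rows cannot all be pairwise orthogonal; in particular the Gram structure in \eqref{eq:ew} forces the existence of two rows (or columns) whose restriction to two suitable coordinates produces a $2\times 2$ submatrix of determinant $\pm 2$. Concretely, one can find two distinct rows $i,j$ and two columns $k,\ell$ such that the submatrix is $\left(\begin{smallmatrix} 1 & 1 \\ 1 & -1 \end{smallmatrix}\right)$ up to sign and permutation, whose determinant is $-2$.

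The main obstacle is producing such a $2\times 2$ submatrix of determinant exactly $\pm 2$ in a clean, structural way rather than by ad hoc case analysis. I would argue it from the defining equation \eqref{eq:ew}: writing $X$ in the $2\times 2$ block form with blocks of size $(2t+1)\times(2t+1)$, the relation $XX^\top = \left(\begin{smallmatrix}(n-2)I+2J & O \\ O & (n-2)I+2J\end{smallmatrix}\right)$ shows that two distinct rows within the same diagonal block have inner product $2$, so they agree in $(n+2)/2$ positions and disagree in $(n-2)/2$ positions. Since $n\equiv 2 \pmod 4$, both counts are odd, hence positive and at least $1$; thus there is a column $k$ where the two rows agree and a column $\ell$ where they disagree. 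The $2\times 2$ submatrix on rows $i,j$ and columns $k,\ell$ then has determinant $\pm 2$, giving $d_2(X)\mid 2$.

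Combining the two divisibilities yields $d_2(X) = 2$, and therefore $x_2 = 2$ by Lemma~\ref{lem:minor}. Note that this argument uses only the $\{1,-1\}$ entries and the inner-product constraint coming from \eqref{eq:ew}, so it applies to every EW matrix and does not require skew-symmetry — which is exactly what is needed for the revised version of \cite[Corollary 2]{KMS}.
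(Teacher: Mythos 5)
Your proposal is correct and follows essentially the same route as the paper, whose entire proof is the one-line observation that every $2\times 2$ minor of a $\{1,-1\}$-matrix is even while some $2\times 2$ minor of $X$ equals $\pm 2$; you simply supply the details of how the Gram condition \eqref{eq:ew} produces such a minor. One small slip in those details: for two distinct rows in the same diagonal block the inner product is $2$, so with $n=4t+2$ the agreement and disagreement counts are $(n+2)/2=2t+2$ and $(n-2)/2=2t$, which are both \emph{even}, not odd as you claim (the counts are odd only for two rows taken from different blocks, where the inner product is $0$). The conclusion you actually need, namely that both counts are at least $1$, still holds trivially since $t\geqslant 1$, so the argument goes through unchanged.
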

\begin{proof}
The result for $x_1$ is trivial, and that for $x_2$ follows from the fact that some $2\times 2$ minor of $X$ is $\pm2$ and all $2 \times 2$ minors of $X$ are even.  
\end{proof}

In the following theorem, we deal with EW matrices that have a block structure. 
We use the notion $[x]^n$ to mean $\underbrace{x,\ldots,x}_{n \text{ times}}$. 
\begin{theorem}\label{thm:a1}
Let $X$ be an EW matrix $X=\begin{pmatrix} R_1 & R_2 \\ -R_2^\top & R_1^\top \end{pmatrix}
$ of order $4t+2$ such that $R_1J=R_1^\top J=r_1J$, $R_2J=R_2^\top J=r_2J$, and $4t+1$ is square-free. 
Let $x_1,\ldots,x_{4t+2}$ be the invariant factors of $X$. 
Then
\begin{enumerate}
\item $x_{4t+2}=2t(4t+1)$;
\item $x_{4t+1}=2t$;
\item if $t=2^\ell q$ such that $q$ is a square-free odd integer, we have 
\begin{align*}
&(x_2,\ldots,x_{4t})\\
&=([2]^{n_1},\ldots,[2^{k-1}]^{n_{k-1}},[2^k]^{n'_k},[2^k q]^{n''_k},[2^{k+1} q]^{n_{k+1}},\ldots,[2^{\ell+1} q]^{n_{\ell+1}})
\end{align*}
for some positive integer $k$ and non-negative integers 
$n_1,\ldots,n_{k-1},n_k'$, $n_k'',n_{k+1},\ldots,n_\ell+1$ such that $1\leqslant k\leqslant \ell+1$, and
\begin{align*}
n_1+\cdots+n_{k-1}+n_k'&=2t+1,\\
n_k''+n_{k+1}+\cdots+n_{\ell+1}&=2t-2,\\
1\cdot n_1+2\cdot n_2+\cdots+(\ell+1)\cdot n_{\ell+1}&=3+2(\ell+2)(t-1),
\end{align*}
where $n_k=n_k'+n_k''$. 
\end{enumerate} 
\end{theorem}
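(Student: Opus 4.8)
The plan is to prove the three parts in order of increasing difficulty, exploiting the block structure $X=\left(\begin{smallmatrix} R_1 & R_2 \\ -R_2^\top & R_1^\top \end{smallmatrix}\right)$ together with the row/column sum conditions $R_1J=R_1^\top J=r_1J$ and $R_2J=R_2^\top J=r_2J$. First I would record the constraints on $r_1,r_2$ forced by $XX^\top$ having the EW form in \eqref{eq:ew}: multiplying the defining relation by $J$ gives $(r_1^2+r_2^2)J = ((n-2)+2\cdot\tfrac{n}{2})J$-type identities, pinning down $r_1^2+r_2^2=2n-2=8t+2$ and showing the all-ones vector behaves predictably. This is the analogue of Lemma~\ref{lem:1} in the general (not-necessarily-skew) setting, and it is the structural input that makes $X^{-1}$ computable entrywise.

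For parts (i) and (ii), the approach mirrors Section~\ref{sec:4t2} almost verbatim. Using $XX^\top=(n-2)I+2\left(\begin{smallmatrix}J&O\\O&J\end{smallmatrix}\right)$ I would write $X^{-1}=X^\top\left(\tfrac{1}{4t}I-\tfrac{1}{4t(4t+1)}\left(\begin{smallmatrix}J&O\\O&J\end{smallmatrix}\right)\right)$ and expand $\det(X)X^{-1}$ using the block sum conditions to evaluate $X^\top \left(\begin{smallmatrix}J&O\\O&J\end{smallmatrix}\right)$ explicitly in terms of $r_1,r_2$. The entries of $\det(X)X^{-1}$ then take finitely many values whose greatest common divisor I would compute; by Lemma~\ref{lem:detminor} this gcd equals $d_{4t+1}(X)$, and since $d_{4t+2}(X)=\det(X)=2t(4t+1)(4t)^{2t-1}\cdot 2\cdot 2t$ (up to the normalisation $\det X=(4t)^{2t}(8t+2)$), Lemma~\ref{lem:minor} gives $x_{4t+2}=d_{4t+2}/d_{4t+1}=2t(4t+1)$, establishing (i). For (ii) I would push the same minor computation one step further: I expect that after extracting the common factor from $\det(X)X^{-1}$, the matrix of cofactors still has all entries divisible by $2t$ but not by more, so that $d_{4t}(X)$ and $d_{4t+1}(X)$ differ by exactly $2t$, giving $x_{4t+1}=2t$. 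The square-free hypothesis on $4t+1$ is what guarantees no extra prime power from $4t+1$ sneaks into the lower invariant factors.

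Part (iii) is the genuine obstacle and requires a different, arithmetic argument rather than a direct minor computation. The strategy is to separate the prime $2$ from the odd part $q$ of $t$ and track their $2$-adic and $q$-adic valuations through the invariant factors. From $\det(X)=\prod x_i = 2^{?}\,t^{2t}(4t+1)$ and parts (i)--(ii), I would first fix the total product of the middle invariant factors $x_2\cdots x_{4t}$ and hence the two linear constraints: a counting constraint on how many factors lie in each "level" and a valuation constraint recording $\sum \log_2 x_i$. The three displayed equations in the statement are precisely these bookkeeping identities — the first two count the invariant factors on either side of the threshold where the odd factor $q$ first appears, and the third is the total-power (determinant) relation. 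I would derive the shape $[2]^{n_1},\dots$ by arguing that each $x_i$ must be of the form $2^a$ or $2^a q$ (using square-freeness of $q$ and of $4t+1$, so no higher power of any odd prime can occur), that the $q$-part switches on exactly once as $i$ increases, and that the $2$-adic valuations increase by at most one per step in a controlled way dictated by the rank bounds over $\mathbb{F}_2$ and $\mathbb{F}_p$ for $p\mid q$.

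The hard part will be justifying the precise location and multiplicity of the level where $q$ enters — i.e. that the odd prime content is confined to a single threshold index $k$ and splits the spectrum as $n_k=n_k'+n_k''$ with the stated sum conditions. I expect to control this by computing $\rank_p(X)$ for each prime $p\mid q$ (showing it forces exactly $2t-1$ or so invariant factors to be divisible by $p$, analogous to Lemma~\ref{lem:prank}) and $\rank_2$-type information for the $2$-adic side, then reconciling these rank counts with the determinant valuation; the two counting equations then follow by pure linear algebra over the residue fields, and the third is the determinant identity. The remaining freedom in the $n_i$ is exactly what the theorem leaves unspecified, so I would not attempt to pin them down further.
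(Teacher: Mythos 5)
Your part (i) is essentially the paper's proof: from $R_1R_1^\top+R_2R_2^\top=4tI+2J$ one gets $r_1^2+r_2^2=8t+2$, hence $r_1,r_2$ odd and (this is where the square-freeness of $4t+1$ is actually used) $\gcd(r_1,r_2)=1$; then the gcd of the entries of $\det(X)X^{-1}$ gives $d_{4t+1}(X)=4(4t)^{2t-1}$ and $x_{4t+2}=2t(4t+1)$. Note you misplace the hypothesis: square-freeness is needed already in (i) to force $\gcd(r_1,r_2)=1$ (when $4t+1=p^2$ one can have $\gcd(r_1,r_2)=p$ and $x_{4t+2}=2tp$, as in Theorem~\ref{thm:3}), not merely to keep primes out of lower invariant factors. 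Part (ii) as you describe it has a genuine gap: you want to conclude $x_{4t+1}=2t$ from the claim that ``the matrix of cofactors has all entries divisible by $2t$ but not by more''. The entries of the cofactor matrix \emph{are} the $(4t+1)\times(4t+1)$ minors of $X$; their gcd is $d_{4t+1}(X)$, which you already exploited in (i), and it carries no information about $d_{4t}(X)$. Since $x_{4t+1}=d_{4t+1}(X)/d_{4t}(X)$, what must be computed is the gcd of the $4t\times 4t$ minors of $X$, equivalently (by Lemma~\ref{lem:detminor}) the gcd of the $2\times 2$ minors of $X^{-1}$. This is precisely what the paper does: it enumerates the finitely many values $\pm2(4t+1)\alpha$ of the $2\times 2$ minors of $4t(4t+1)X^{-1}$, checks every $\alpha$ is even (using $r_1,r_2$ odd), and exhibits principal $2\times2$ minors whose gcd is exactly $4(4t+1)$ (using $\gcd(r_1,r_2)=1$), whence $d_{4t}(X)=8(4t)^{2t-2}$.

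For part (iii) your route diverges from the paper's and two of its ingredients fail. First, your claim that ``the $2$-adic valuations increase by at most one per step'' is false: the EW matrix of Example~\ref{ex:2} has invariant factors jumping from $2$ to $8$ and from $8$ to $32$; the theorem survives only because the multiplicities $n_i$ are allowed to vanish, so you must not try to prove this. Second, the lower-bound half of Lemma~\ref{lem:prank} is special to the skew case: it bounds $2\rank_p(A+I)-d$ below by $\rank_p\bigl((A+I)+(A^\top+I)\bigr)=\rank_p(J+I)=4t$, and there is no analogue for a general block EW matrix, where $X+X^\top$ has no structure. One can still prove $\rank_p(X)=2t+2$ for each prime $p\mid q$ (Sylvester's inequality applied to $XX^\top\equiv 2\operatorname{diag}[J,J]\pmod{p}$ gives the upper bound; the lower bound holds because at most $v_p(\det X)=2t$ invariant factors can be divisible by $p$), but the lower bound is then just a valuation count, and the paper dispenses with ranks altogether. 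Its argument: from (i), (ii) and $\det(X)=x_1\cdots x_{4t+2}$, writing $x_i=2x_i'$, one gets $x_2'\cdots x_{4t}'=t^{2t-2}$; if an odd prime $p$ divided $x_{2t+2}'$, the divisibility chain would force $p$ to divide the $2t-1$ factors $x_{2t+2}',\ldots,x_{4t}'$, so $p^{2t-1}\mid t^{2t-2}$, impossible since square-freeness of $q$ gives $v_p(t^{2t-2})=2t-2$; hence $x_2,\ldots,x_{2t+2}$ are powers of $2$, and then, for each prime $p'\mid q$, the remaining $2t-2$ factors $x_{2t+3}',\ldots,x_{4t}'$ each divide $x_{4t+1}'=t$ (so have $p'$-valuation at most $1$) and their valuations must sum to $2t-2$, forcing each to equal a power of $2$ times $q$. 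Your three bookkeeping identities then follow; so the correct skeleton for (iii) is this short arithmetic argument, not the rank machinery you propose.
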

\begin{proof}[Proof of (i)]
Since $X$ is an EW matrix, $R_1R_1^\top+R_2R_2^\top=4tI+2J$. 
Multiplying all-ones vectors from both sides and dividing by $2t+1$ yield 
\begin{align*}
r_1^2+r_2^2=8t+2.
\end{align*}
Considering this equation modulo $4$, we have $r_1$ and $r_2$ are both odd. 
Since $4t+1$ is square-free, $\text{gcd}(r_1,r_2)=1$. 

Next we calculate $X^{-1}$ as follows:  
\begin{align*}
X^{-1}&=X^\top \left(\frac{1}{4t}I-\frac{1}{4t(4t+1)}\begin{pmatrix}J & O \\ O & J \end{pmatrix}\right)\\
&=\frac{1}{4t}X^\top-\frac{1}{4t(4t+1)}\begin{pmatrix}R_1^\top J & -R_2J \\ R_2^\top J & R_1 J \end{pmatrix}\\
&=\frac{1}{4t}X^\top-\frac{1}{4t(4t+1)}\begin{pmatrix}r_1 J & -r_2 J \\ r_2J & r_1 J \end{pmatrix}\\
&=\frac{1}{4t(4t+1)}\left((4t+1)X^\top-\begin{pmatrix}r_1 J & -r_2 J \\ r_2 J & r_1 J  \end{pmatrix}\right).
\end{align*}
Since $\det(X)=2(4t+1)(4t)^{2t}$, 
\begin{align*}
\det(X)X^{-1}&=2(4t)^{2t-1}\left((4t+1)X^\top-\begin{pmatrix}r_1 J & -r_2 J \\ r_2 J & r_1 J  \end{pmatrix}\right).
\end{align*}
Therefore the entries of $\det(X)X^{-1}$ are $\pm 2(4t)^{2t-1}(4t+1\pm x)$ where $x\in\{r_1,r_2\}$. 
Then the greatest common divisor of $4t+1\pm x$ where $x\in\{r_1,r_2\}$ is 
\begin{align*}
\text{gcd}(4t+1\pm r_1,4t+1\pm r_2)&=\text{gcd}(4t+1+ r_1,2r_1,4t+1+r_2,2r_2)
=2,
\end{align*}
where in the last equation we used that $r_1$ and $r_2$ are both odd integers and $\text{gcd}(r_1,r_2)=1$. 
Therefore $d_{4t+1}(X)=4(4t)^{2t-1}$, and hence we have $x_{4t+2}=\det(X)/d_{4t+1}(X)=2t(4t+1)$.
\end{proof}

Since $x_2=2$ divides $x_{i}$ for each $i\geqslant 2$, we may set $x_i=2x^\prime_i$ for $2\leqslant i\leqslant 4t+2$. 

\begin{proof}[Proof of (ii)]

By Lemma~\ref{lem:detminor}, it is enough to calculate $2\times 2$ minors of $X^{-1}$. 
It is straightforward to check that each possible $2\times 2$ minor of $4t(4t+1)X^{-1}$ has the form $\pm 2(4t+1)\alpha$, where $\alpha$ is an element of the set $S$ given by 
\begin{align*}
	S &= \left \{ \begin{matrix}
		0, &
		   4t, &
		    2 r_1, &
		 2 r_2, &
		     (r_1\pm 1), 
		    \\
					 (r_2\pm 1), &
  		    (4t+1\pm r_1), &
  		 (4t+1\pm r_2), & (r_1\pm r_2), &
		   (4t+2\pm r_1\pm r_2)
	\end{matrix} \right \}.
\end{align*}

By the proof of (i), both $r_1$ and $r_2$ are odd integers.
Hence each element of $S$ is divisible by $2$, and therefore each $2\times 2$ minor of $4t(4t+1)X^{-1}$ is divisible by $4(4t+1)$.

Any principal minor induced on the first $(2t+1)$ rows of $X$ is, up to sign, equal to $2(4t+1)(4t+1\pm r_1)$ or $4(4t+1)r_1$.   
Also any principal minor induced on a subset of the rows  $\{ 2t+2, \dots,4t+2 \}$ is, up to sign, equal to $2(4t+1)(4t+1\pm r_2)$ or $4(4t+1)r_2$. 
Thus the greatest common divisor of the $2\times 2$ minors of $4t(4t+1)S^{-1}$ is $4(4t+1)$. 
By Lemma~\ref{lem:detminor}, $d_{4t}(X)=8(4t)^{2t-2}$.  
Combining Lemma~\ref{lem:minor} and $d_{4t+1}(X)=4(4t)^{2t-1}$, we obtain $x_{4t+1}=d_{4t+1}(X)/d_{4t}(X)=2t$. 
\end{proof}


\begin{proof}[Proof of (iii)]
By (i), (ii), and the equality $\det(X)=x_1\cdots x_{4t+2}$, we have that 
\begin{align}\label{eq:a1}
x^\prime_2 \cdots x^\prime_{4t}=t^{2t-2}.
\end{align} 
Suppose, for a contradiction, that $x^\prime_{2t+2}$ is divisible by an odd integer greater than $1$.  
Let $p$ be any odd prime divisor of $x^\prime_{2t+2}$. 
Then $x^\prime_2 \cdots x^\prime_{4t}=t^{2t-2}$ is divisible by $p^{2t}$.  
However this is impossible; since $t/2^\ell$ is square-free $t^{2t-2}$ cannot have $p^{2t}$ as a divisor. 
Therefore $x^\prime_{2t+2}$ is a power of $2$. 

Next let $p'$ be any prime divisor of $q$. 
By the above, we see that
$x^\prime_{2t+3} \cdots x^\prime_{4t}=2^r q^{2t-2}$ for some $r$.
Therefore, for $i\in\{2t+3,\ldots,4t\}$, $x_{i}$ equals to a power of $2$ times $q$. 
Thus, the invariant factors $x_i$ must have the form given in part (iii).   
\end{proof}

\section{The Smith normal form of an EW matrix of order $4t+2$ where $4t+1=p^2$, for $p$ a prime} \label{appendix:b}

\begin{theorem}\label{thm:3}
Let $X$ be an EW matrix $X=\begin{pmatrix} R_1 & R_2 \\ -R_2^\top & R_1^\top \end{pmatrix}
$ of order $4t+2$ such that $R_1J=R_1^\top J=r_1J$, $R_2J=R_2^\top J=r_2J$, and $4t+1=p^2$ where $p$ is a prime. 
Let $x_1,\ldots,x_{4t+2}$ be the invariant factors of $X$. 
Then 
\begin{enumerate}
\item  $\textrm{gcd}(r_1,r_2)$ is $1$ or $p$;
\item $x_{4t+2}$ is $2t(4t+1)$ or $2tp$;
\item $x_{4t+1}$ is a divisor of $2t$ if $x_{4t+2}=2t(4t+1)$, and $x_{4t+1}$ is a divisor of $2tp$ and $p$ divides $x_{4t+1}$ if $x_{4t+2}=2tp$;
\item if $t$ is square-free, then the Smith normal form of $X$ is either 
\begin{align*}
		&\operatorname{diag}[1,\underbrace{2,\dots,2}_{2t+1},\underbrace{2t,\dots,2t}_{2t-1},2t(4t+1)], \text{ or }\\
&\operatorname{diag}[1,\underbrace{2,\dots,2}_{2t+1},\underbrace{2t,\dots,2t}_{2t-2},2tp,2tp]. 
\end{align*}

\end{enumerate} 
\end{theorem}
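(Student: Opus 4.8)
The plan is to rerun the $X^{-1}$ computation from the proof of Theorem~\ref{thm:a1}, but now extract the extra arithmetic available because $4t+1 = p^2$. As there, the block hypotheses force $r_1^2 + r_2^2 = 8t+2 = 2p^2$ with $r_1,r_2$ both odd. For part~(i), if $d = \gcd(r_1,r_2)$ then $d$ is odd and $d^2 \mid 2p^2$, hence $d^2 \mid p^2$ and $d \in \{1,p\}$; this dichotomy drives everything else. In the case $d = p$, writing $r_i = p a_i$ gives $a_1^2 + a_2^2 = 2$, so $r_1 = \pm p$ and $r_2 = \pm p$.

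For part~(ii) I would reuse $\det(X)X^{-1} = 2(4t)^{2t-1}\bigl((4t+1)X^\top - \left(\begin{smallmatrix} r_1 J & -r_2 J \\ r_2 J & r_1 J \end{smallmatrix}\right)\bigr)$, whose entries are, up to sign, $2(4t)^{2t-1}(p^2 \pm r_i)$. By Lemmas~\ref{lem:detminor} and~\ref{lem:minor}, $d_{4t+1}(X) = 2(4t)^{2t-1}\,g$ with $g = \gcd(p^2+r_1,\,p^2-r_1,\,p^2+r_2,\,p^2-r_2)$, so $x_{4t+2} = \det(X)/d_{4t+1}(X)$ is pinned down by $g$. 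A short argument shows $g$ can only have $2$ and $p$ as prime divisors: any odd prime $\ell \mid g$ divides $2r_1$ and $2r_2$, hence $\ell \mid \gcd(r_1,r_2)$. When $\gcd(r_1,r_2) = 1$ this yields $g = 2$ and $x_{4t+2} = 2tp^2 = 2t(4t+1)$; when $r_1 = r_2 = \pm p$ one computes $g = \gcd(p(p+1),p(p-1)) = 2p$, giving $x_{4t+2} = 2tp$.

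Part~(iii) I would handle by divisibility and $p$-adic valuations alone, with no further minor computation. Since all invariant factors are positive integers, $x_{4t+1}x_{4t+2} \mid \det(X) = 2p^2(4t)^{2t}$, and $x_{4t+1} \mid x_{4t+2}$. If $x_{4t+2} = 2t(4t+1)$, then $x_{4t+1}\cdot t \mid (4t)^{2t}$ forces $p \nmid x_{4t+1}$, whence $x_{4t+1} \mid 2t$ because $\gcd(2t,p) = 1$. If $x_{4t+2} = 2tp$, then comparing valuations ($v_p(\det X) = 2$, $v_p(x_{4t+2}) = 1$) together with the monotonicity $v_p(x_i) \leqslant v_p(x_{4t+1})$ forces $p \mid x_{4t+1}$, while $x_{4t+1} \mid 2tp$ is immediate.

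For part~(iv), with $t$ square-free, the decisive tool is that for each prime $\ell$ the sequence $v_\ell(x_2) \leqslant \dots \leqslant v_\ell(x_{4t+1})$ is nondecreasing. Normalising $x_i = 2x_i'$ (legitimate since $x_2 = 2$ by Lemma~\ref{lem:a1}) gives $\prod_{i=2}^{4t+2} x_i' = p^2 t^{2t}$, and removing the top factor from part~(ii) leaves $\prod_{i=2}^{4t+1} x_i'$ equal to $t^{2t-1}$ or $p\,t^{2t-1}$. For a prime $\ell \mid t$, squarefreeness and $x_i' \mid x_{4t+1}'$ force $v_\ell(x_i') \in \{0,1\}$, and a nondecreasing $0/1$ sequence of length $4t$ summing to $2t-1$ is rigid: it must consist of $2t+1$ zeros followed by $2t-1$ ones. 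The crux of the argument, and the step I expect to be the main obstacle, is exactly this rigidity — I must verify that the transition index is identical for every $\ell \mid t$ and that, in the second case, the single surviving factor of $p$ lands on $x_{4t+1}'$ (via part~(iii)), so that the $\ell$-parts assemble simultaneously into $t$. Granting this, the two cases of parts~(ii)–(iii) deliver precisely the two Smith normal forms displayed in part~(iv).
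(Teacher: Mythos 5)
Your proposal is correct, and for parts (i), (ii), and the case $x_{4t+2}=2tp$ of (iii)--(iv) it follows essentially the paper's route: rerun the $\det(X)X^{-1}$ computation of Theorem~\ref{thm:a1}, reduce to the gcd $g=\gcd(p^2\pm r_1,\,p^2\pm r_2)$ (your explicit reduction to $r_1=r_2=\pm p$ when $\gcd(r_1,r_2)=p$, giving $g=\gcd(p(p+1),p(p-1))=2p$, is if anything cleaner than the paper's appeal to $2\gcd(4t+1,r_1,r_2)$), and then finish by divisibility in the product $\prod_i x_i=\det(X)$. The one genuinely different step is part (iii) in the case $x_{4t+2}=2t(4t+1)$: the paper disposes of it by repeating the $2\times 2$-minor computation of Theorem~\ref{thm:a1}(ii) (via Lemma~\ref{lem:detminor}, obtaining $d_{4t}(X)=8(4t)^{2t-2}$ and hence the exact value $x_{4t+1}=2t$), whereas you use only $x_{4t+1}x_{4t+2}\mid\det(X)$ together with $\gcd(p,4t)=1$. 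Your argument is more elementary and avoids a second round of minor bookkeeping; it yields only $x_{4t+1}\mid 2t$ rather than equality, but that is precisely what the statement claims, and it suffices for (iv), where square-freeness of $t$ forces $x'_{4t+1}=t$ anyway (without square-freeness, the paper's computation retains strictly more information). Two remarks on (iv): your valuation-rigidity argument is the paper's contradiction argument ($q^{2t}\nmid t^{2t-1}$ for $t$ square-free) in different clothes, and the step you flag as the expected main obstacle is in fact automatic --- a nondecreasing $\{0,1\}$-valued sequence of length $4t$ with sum $2t-1$ must consist of $2t+1$ zeros followed by $2t-1$ ones, so the transition index coincides for every prime $\ell\mid t$ with no further verification; also, when writing this up, note that the constraint $v_\ell(x'_i)\leqslant 1$ comes from part (iii) (which gives $x'_{4t+1}\mid t$ or $x'_{4t+1}\mid tp$), not from square-freeness and $x'_i\mid x'_{4t+1}$ alone, so that dependence should be made explicit.
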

\begin{proof}[Proof of (i)]
By the same argument with Theorem~\ref{thm:a1}, we have
\begin{align*}
r_1^2+r_2^2=8t+2=2p^2.
\end{align*}
Then $\textrm{gcd}(r_1,r_2)$ divides $p$, and it follows from the assumption $p$ being prime that $\textrm{gcd}(r_1,r_2)=1$ or $p$. 
\end{proof}

\begin{proof}[Proof of (ii)]
By the same argument as in the proof of Theorem~\ref{thm:a1}, we have 
\begin{align*}
\det(X)X^{-1}&=2(4t)^{2t-1}\left((4t+1)X^\top-\begin{pmatrix}r_1J & -r_2 J \\ r_2J & r_1 J  \end{pmatrix}\right).
\end{align*}
Therefore the entries of $\det(X)X^{-1}$ are $\pm 2(4t)^{2t-1}(4t+1\pm x)$ where $x\in\{r_1,r_2\}$. 
Then the greatest common divisor of $4t+1\pm x$ where $x\in\{r_1,r_2\}$ is 
\begin{align*}
\text{gcd}(4t+1\pm r_1,4t+1\pm r_2)&=2\text{gcd}(4t+1,r_1,r_2)=\begin{cases}
2 & \text{ if } \textrm{gcd}(r_1,r_2)=1,\\
2p & \text{ if } \textrm{gcd}(r_1,r_2)=p.\\
\end{cases}
\end{align*}
Therefore 
\begin{align*}
d_{4t+1}(X)=\begin{cases}
4(4t)^{2t-1} & \text{ if } \textrm{gcd}(r_1,r_2)=1,\\
4p(4t)^{2t-1} & \text{ if } \textrm{gcd}(r_1,r_2)=p,
\end{cases}
\end{align*}
and thus 
\begin{align*}
x_{4t+2}=\det(X)/d_{4t+1}(X)=\begin{cases}
2t(4t+1) & \text{ if } \textrm{gcd}(r_1,r_2)=1,\\
2tp & \text{ if } \textrm{gcd}(r_1,r_2)=p.\qedhere
\end{cases}
\end{align*}
\end{proof}

\begin{proof}[Proof of (iii)]
For the case $x_{4t+2}=2t(4t+1)$, the proof is same as that of Theorem~\ref{thm:a1}(ii). 

For the case $x_{4t+2}=2tp$,  $x_{4t+1}$ divides $x_{4t+2}$. Thus $x_{4t+1}$ divides $2tp$. 
By $x_{4t+2}=2tp$ and $\det(X)=x_1\cdots x_{4t+2}$, we have that 
\begin{align*}
x^\prime_2 \cdots x^\prime_{4t+1}=pt^{2t-1}.
\end{align*} 
Since $\textrm{gcd}(p,t)=1$, $p$ divides $x^\prime_{4t+1}$.  
\end{proof}

\begin{proof}[Proof of (iv)]
For the case $x_{4t+2}=2t(4t+1)$, the proof is same as that of Theorem~\ref{thm:a1}(iii). 

For the case $x_{4t+2}=2tp$,  the proof is also similar to that of Theorem~\ref{thm:a1}(iii), but we include it here.
By (ii) and $\det(X)=x_1\cdots x_{4t+2}$, we have that $x^\prime_2 \cdots x^\prime_{4t+1}=pt^{2t-1}$. 
Furthermore we put $x^{\prime\prime}_{4t+1}=x_{4t+1}/p$, which is integer by (iii). 
Then $x^\prime_2 \cdots x^\prime_{4t}x^{\prime\prime}_{4t+1}=t^{2t-1}$. 
Suppose, for a contradiction, that $x^\prime_{2t+2}>1$. 
Let $q$ be any prime divisor of $x^\prime_{2t+2}$. 
Then $x^\prime_2 \cdots x^\prime_{4t} x^{\prime\prime}_{4t+1}=t^{2t-1}$ is divisible by $q^{2t}$.  
However this is impossible; since $t$ is square-free $t^{2t-1}$ cannot have $q^{2t}$ as a divisor. 
Therefore $x^\prime_{2t+2}=1$.
Furthermore
\begin{align}\label{ineq:31}
x^\prime_{2t+3} \cdots x^\prime_{4t} x^{\prime\prime}_{4t+1}=t^{2t-1}.
\end{align} 
By (ii) we have $x^{\prime\prime}_{4t+1}\leqslant t$ and thus $x^\prime_i\leqslant t$ for $i\in\{2t+3,\ldots,4t\}$. 
Therefore, by \eqref{ineq:31}, we have $x^{\prime\prime}_{4t+1}= t$ and $x^\prime_{i}= t$ for all $i\in\{2t+3,\ldots,4t\}$. 
\end{proof}

Barba~\cite{Bar33} showed that for a $\{1,-1\}$-matrix $X$ of order $n \equiv 1\pmod{2}$, Hadamard's inequality can be strengthened to
\begin{align}\label{ineq:B}
|\det(X)| \leqslant (2n-1)^{1/2} (n-1)^{(n-1)/2}.
\end{align}
Moreover, there exists~\cite{CRCHAndbook2007} a $\{1,-1\}$-matrix achieving equality in \eqref{ineq:B} if and only if $n \equiv 1\pmod{4}$ and there exists a $\{1,-1\}$-matrix $B$ such that
\begin{align}\label{eq:b}
BB^\top=B^\top B=(n-1)I+J. 
\end{align}
A matrix $B$ is called a \textbf{Barba matrix} if it satisfies \eqref{eq:b}.  
It is known that if  $R$ is a Barba matrix of order $n$, then the matrix $\left (\begin{smallmatrix}
	R & R\\
	-R^\top & R^\top
	\end{smallmatrix} \right )$
is an EW matrix of order $2n$. 
We therefore pose the following question about Smith normal forms of EW matrices constructed from Barba matrices. 
\begin{problem}
	Does there exist a Barba matrix $R$ such that the Smith normal form of $\left (\begin{smallmatrix}
	R & R\\
	-R^\top & R^\top
	\end{smallmatrix} \right )$ is not equal to	
			$\operatorname{diag}[1,\underbrace{2,\dots,2}_{2t},\underbrace{2t,\dots,2t}_{2t-1},2t\sqrt{8t+1}]$.
\end{problem}

%


\section{The Smith normal form of an EW tournament matrix}
\label{sec:snftournament}

In this section we determine the Smith normal form of an EW tournament matrix.

\begin{theorem}\label{thm:snfa}
The Smith normal form of an EW tournament matrix of order $4t+1$ is 
\begin{align*}
		&\operatorname{diag}[\underbrace{1,\dots,1}_{2t+2},\underbrace{t,\dots,t}_{2t-2},t^2(4t-1)]. 
\end{align*}
\end{theorem}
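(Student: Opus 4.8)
The plan is to determine the Smith normal form of an EW tournament matrix $A$ of order $4t+1$ by pinning down the greatest common divisors $d_i(A)$ of the $i\times i$ minors, via Lemma~\ref{lem:minor}, and then invoke the result $\det(A)=t^{2t}(4t-1)$ from Lemma~\ref{lem:snfa1}. First I would record what the claimed form predicts: $d_{4t+1}(A)=\det(A)=t^{2t}(4t-1)$, the top invariant factor is $t^2(4t-1)$, there are $2t-2$ invariant factors equal to $t$, and the first $2t+2$ invariant factors are $1$. Equivalently, the product of the invariant factors after the first $2t+2$ is $t^{2t-2}\cdot t^2(4t-1)=t^{2t}(4t-1)$, consistent with $\det(A)$.

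The key computational engine will again be Lemma~\ref{lem:detminor}: the minors of $A$ are controlled, up to the factor $\det(A)$, by the complementary minors of $A^{-1}$, and $A^{-1}$ can be written explicitly. From \eqref{eq:ew1} one has $AA^\top = t(I+J) + (\text{low-rank correction})$, and the eigenvalue/eigenvector analysis underlying Theorem~\ref{thm:quadforma} (together with the structure in \eqref{eq:ew1}--\eqref{eq:AtA}) gives enough control to express $\det(A)\,A^{-1}$ with entries that are small integer combinations of $t$ and $4t-1$. I expect that the entries of $\det(A)\,A^{-1}$ are all divisible by a common factor whose analysis yields $d_{4t}(A)$, and hence the top invariant factor $t^2(4t-1)$ via $\det(A)/d_{4t}(A)$; this mirrors exactly the strategy used in Section~\ref{sec:4t2} for the top invariant factor of $S$.

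For the bulk of the spectrum I would work modulo a prime $p$ dividing $t$. The rank bound $\rank_p(A+I)\le 2t+1$ from Lemma~\ref{lem:rankub} has a natural analogue for $A$ itself: reducing \eqref{eq:ew1} modulo $p$ collapses $AA^\top$ to a low-rank matrix, which forces $\rank_p(A)$ to be small and thus forces many invariant factors to be divisible by $p$. Combining a lower bound on the multiplicity of $p$ in the invariant factors (from the mod-$p$ rank) with the exact exponent of $p$ in $\det(A)=t^{2t}(4t-1)$ should squeeze the $p$-adic valuations into exactly the claimed pattern $1,\dots,1,t,\dots,t,t^2(4t-1)$; the coprimality $\gcd(t,4t-1)=1$ lets the prime $p\mid t$ and any prime dividing $4t-1$ be treated independently, exactly as in the proof of Theorem~\ref{thm:main} and Proposition~\ref{prop:b2t1}.

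The main obstacle will be establishing the \emph{middle} block cleanly, namely that precisely $2t-2$ invariant factors equal $t$ (and no more are divisible by $t^2$ except the last). The upper bound on each such factor should follow from a $2\times2$- or $(4t)\times(4t)$-minor computation showing $d_{4t-1}(A)$ and $d_{4t}(A)$ have the right $p$-adic valuations, while the lower bound follows from the mod-$p$ rank deficiency. Balancing these two sides — ensuring the counting $n_1+\dots$ forces equality, as in the equality argument \eqref{eq:m1} used to finish Theorem~\ref{thm:main} — is where the argument must be made airtight, and I would handle it by the same device: an inequality on the product of the middle factors from the divisibility chain, matched against the exact value $t^{2t-2}$ extracted from $\det(A)$.
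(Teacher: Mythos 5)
Your overall architecture (mod-$p$ rank for the middle block, explicit control of the top minors, then the squeeze of the product $a_{2t+3}\cdots a_{4t+1}$ against $\det(A)=t^{2t}(4t-1)$) matches the paper's, but the step you rely on for the top two invariant factors does not go through, and it is exactly here that the paper does something you have not anticipated. You propose to imitate Section~\ref{sec:4t2} and write $\det(A)A^{-1}$ (and the $2\times 2$ minors of $A^{-1}$) explicitly. For the skew matrix $S$ this works only because $S^\top=2I-S$ and Lemma~\ref{lem:1} make $\det(S)S^{-1}$ an explicit linear expression in $S$ plus a block-constant matrix, so its entries take finitely many known values. For the tournament matrix $A$ the analogue breaks down: from \eqref{eq:ew1} one gets $(AA^\top)^{-1}=\tfrac{1}{t}I+D$ with $D$ block-constant, whence $A^{-1}=A^\top(AA^\top)^{-1}=\tfrac{1}{t}A^\top+A^\top D$, and the entries of $A^\top D$ are governed by the row sums of $A^\top$ \emph{within each block} of the partition underlying \eqref{eq:ew1}--\eqref{eq:AtA}. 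The defining equations supply only the total row sums (the diagonal of $AA^\top$), not these partial sums, so there is no entrywise formula for the cofactors of $A$, and neither $d_{4t}(A)$ nor $d_{4t-1}(A)$ can be computed along these lines; Theorem~\ref{thm:quadforma} does not rescue this, since it only pins down the block size $a$. The paper's resolution is its key extra ingredient: although $A^{-1}$ is not explicit, the matrix $A^2+A=A(A+I)=AJ-AA^\top$ \emph{is} explicit (row sums of $A$ are constant on blocks and $AA^\top$ is given by \eqref{eq:ew1}), so Lemma~\ref{lem:a2a} inverts it and reads off its top invariant factors $l_{4t}=t$ and $l_{4t+1}=t^2(16t^2-1)$, and Lemma~\ref{lem:snfprod} transfers these to upper bounds $a_{4t}\mid t$ and $a_{4t+1}\mid t^2(16t^2-1)$, hence $a_{4t+1}\mid t^2(4t-1)$ after intersecting with $\det(A)$.

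There is a second, smaller gap: your mod-$p$ argument invokes the wrong direction of the rank bound. Reducing \eqref{eq:ew1} modulo $p\mid t$ and applying Sylvester's inequality gives only $\rank_p(A)\leqslant 2t+2$, which forces invariant factors to be \emph{divisible} by $p$; what the claimed form requires is $a_{2t+2}=1$, i.e.\ the \emph{lower} bound $\rank_p(A)\geqslant 2t+2$, which the paper obtains by exhibiting three independent vectors in $\mathrm{rowsp}_p(A)\cap\mathrm{rowsp}_p(A^\top)$ (Lemma~\ref{lem:snfa2}). Without that lower bound, even for square-free $t$ your valuation count admits the alternative pattern $\operatorname{diag}[1,\dots,1,t,\dots,t,t(4t-1)]$ with $2t-1$ middle factors equal to $t$ replaced by $2t$ factors equal to $t$ (all $p$-valuations equal to $1$, summing to $2t$), and for non-square-free $t$ still more patterns survive unless you also have the divisibility bounds $a_{4t}\mid t$ and $a_{4t+1}\mid t^2(4t-1)$ discussed above. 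So both halves of your plan need ingredients you have not supplied: the row-space intersection argument for the rank lower bound, and a substitute, such as the paper's $A(A+I)$ device, for the unavailable explicit inversion of $A$.
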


Recall that for an integer matrix $M$, we denote by $d_i(M)$ the greatest common divisor of all $i\times i$ minors of $M$ and $d_0(M)=1$.
The first lemma we need is elementary.

\begin{lemma}[page 33 of \cite{MN}]\label{lem:snfprod}
Let $M$ and $N$ be invertible $n\times n$ integer matrices, and let $m_i$ be the invariant factors of $M$ and 
$p_i$ be the invariant factors of $MN$. 
Then $m_i$ divides $p_i$ for each $i$.
\end{lemma}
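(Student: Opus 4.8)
The plan is to avoid the determinantal divisors $d_i$ and argue module-theoretically via cokernels. Since $N$ is an integer matrix we have $N\mathbb{Z}^n \subseteq \mathbb{Z}^n$, hence
\[
MN\mathbb{Z}^n = M(N\mathbb{Z}^n) \subseteq M\mathbb{Z}^n \subseteq \mathbb{Z}^n.
\]
Because $M$ and $MN$ are invertible, the cokernels $\operatorname{coker}(M)=\mathbb{Z}^n/M\mathbb{Z}^n$ and $\operatorname{coker}(MN)=\mathbb{Z}^n/MN\mathbb{Z}^n$ are \emph{finite} abelian groups, and the inclusion of lattices above makes the identity on $\mathbb{Z}^n$ descend to a natural \emph{surjection}
\[
\pi\colon \operatorname{coker}(MN) \twoheadrightarrow \operatorname{coker}(M).
\]
By the Smith normal form, $\operatorname{coker}(M)\cong\bigoplus_{i=1}^n \mathbb{Z}/m_i\mathbb{Z}$ and $\operatorname{coker}(MN)\cong\bigoplus_{i=1}^n \mathbb{Z}/p_i\mathbb{Z}$ (summands with invariant factor $1$ being trivial). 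Thus the lemma reduces to the purely group-theoretic statement: \emph{if a finite abelian group $G$ surjects onto $H$, then the $i$-th invariant factor of $H$ divides the $i$-th invariant factor of $G$ for every $i$.}

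To prove this reduced statement I would localize at each prime $q$ and pass to the $q$-primary parts, obtaining a surjection $G_q \twoheadrightarrow H_q$. For any finite abelian $q$-group $A$ one has $\dim_{\mathbb{F}_q}(q^{k-1}A/q^kA)$ equal to the number of cyclic summands $\mathbb{Z}/q^\lambda$ with $\lambda \geqslant k$. Since the assignment $A\mapsto q^{k-1}A/q^kA$ is functorial, the surjection $G_q\to H_q$ induces a surjection $q^{k-1}G_q/q^kG_q \to q^{k-1}H_q/q^kH_q$, so comparing $\mathbb{F}_q$-dimensions yields, for every $k\geqslant 1$, that the number of $q$-exponents of $\operatorname{coker}(M)$ that are $\geqslant k$ is at most the corresponding count for $\operatorname{coker}(MN)$. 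Sorting the $q$-exponents in decreasing order, this is precisely the statement that the conjugate partition of the exponents of $\operatorname{coker}(M)$ is dominated by that of $\operatorname{coker}(MN)$, which is equivalent to the termwise inequality that the $j$-th largest $q$-exponent of $\operatorname{coker}(M)$ is at most the $j$-th largest $q$-exponent of $\operatorname{coker}(MN)$.

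Translating back, the invariant factors are listed so that $m_1\mid\cdots\mid m_n$ and $p_1\mid\cdots\mid p_n$, so the $q$-adic valuation of the $i$-th invariant factor is the $(n+1-i)$-th largest $q$-exponent of the corresponding cokernel. The inequality just obtained therefore reads $v_q(m_i)\leqslant v_q(p_i)$ for all $i$; as $q$ ranges over all primes this gives $m_i\mid p_i$, as required. The main obstacle is the reduced group-theoretic claim: a naive determinantal argument only shows $d_i(M)\mid d_i(MN)$, i.e.\ $m_1\cdots m_i \mid p_1\cdots p_i$, which is strictly weaker than $m_i\mid p_i$ (the sequences $(1,4)$ and $(2,2)$ satisfy the product relation and the divisibility chains yet fail termwise divisibility). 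It is exactly the \emph{surjectivity} of $\pi$, encoded through the dimension-count/majorization step, that upgrades the product relation to the termwise one.
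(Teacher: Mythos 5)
Your proof is correct, but there is no in-paper argument to compare it with: the paper states Lemma~\ref{lem:snfprod} as a quoted result from page 33 of Newman's \emph{Integral Matrices} \cite{MN}, where the result is established in the classical matrix-theoretic framework of determinantal divisors and Smith-form reductions. Your route is module-theoretic and self-contained: the inclusion $MN\mathbb{Z}^n\subseteq M\mathbb{Z}^n$ yields a surjection $\operatorname{coker}(MN)\to\operatorname{coker}(M)$ of finite abelian groups, the Smith normal form identifies the invariant factors of the matrices with those of their cokernels, and the remaining group-theoretic fact (a surjection of finite abelian groups forces termwise divisibility of invariant factors) is proved by passing to $q$-primary parts and counting $\dim_{\mathbb{F}_q}(q^{k-1}A/q^kA)$, which equals the number of cyclic summands of exponent at least $k$ and is monotone under surjections by functoriality. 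All the steps check: surjectivity does descend to the quotients $q^{k-1}(\cdot)/q^k(\cdot)$, comparing these counts for every $k$ is equivalent to the componentwise inequality of the sorted exponent sequences (componentwise inequality of partitions is equivalent to that of their conjugates), and your unwinding of the ordering conventions to get $v_q(m_i)\leqslant v_q(p_i)$ for all $i$ and all primes $q$ is accurate. You also diagnose the crux correctly: Cauchy--Binet alone gives only $d_i(M)\mid d_i(MN)$, i.e.\ the product relation $m_1\cdots m_i\mid p_1\cdots p_i$, and your $(1,4)$ versus $(2,2)$ example rightly shows this does not imply $m_i\mid p_i$, so the surjectivity input is genuinely doing work. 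As for what each approach buys: the paper's citation keeps the exposition short and within elementary matrix theory, while your argument is self-contained, conceptually transparent, and works verbatim over any principal ideal domain. One terminological caution: ``dominated'' usually names the majorization order, which would \emph{not} suffice here; what you actually prove, and need, is containment (componentwise inequality) of the conjugate partitions, so it would be better to say that outright.
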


Now we state and prove two lemmas that we will use to prove Theorem~\ref{thm:snfa}.

\begin{lemma}\label{lem:snfa2}
	Let $A$ be an EW tournament matrix of order $4t+1$.
Suppose that a prime $p$ divides $t$. 
Then $\rank_p(A)=2t+2$. 
\end{lemma}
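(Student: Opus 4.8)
The plan is to mirror the structure of the proof of Lemma~\ref{lem:prank}, but now working with $A$ itself rather than $A+I$, and using a prime $p$ dividing $t$. First I would establish the lower bound $\rank_p(A)\geqslant 2t+2$ by a row-space-intersection argument. Writing $d=\dim(\mathrm{rowsp}_p(A)\cap\mathrm{rowsp}_p(A^\top))$ and using $A+A^\top=J-I$, the analogue of the inequality chain gives
\begin{align*}
\rank_p(J-I)\leqslant \rank_p(A)+\rank_p(A^\top)-d=2\rank_p(A)-d.
\end{align*}
Since $p\mid t$ and $4t+1\equiv 1\pmod p$, the matrix $J-I$ of order $4t+1$ is invertible modulo $p$ (its eigenvalues are $4t=-1$ and $-1$, both nonzero mod $p$), so $\rank_p(J-I)=4t+1$. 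This yields $2t+\tfrac{1+d}{2}\leqslant\rank_p(A)$, so it suffices to exhibit a common vector forcing $d\geqslant 1$, which then gives $\rank_p(A)\geqslant 2t+1$; to reach $2t+2$ I expect to need $d\geqslant 3$, or equivalently to produce an explicit two- or three-dimensional intersection of the mod-$p$ row spaces.

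For the intersection, I would read off vectors in $\mathrm{rowsp}_p(A)$ and $\mathrm{rowsp}_p(A^\top)$ from the factorizations \eqref{eq:ew1} and \eqref{eq:ew2}. Reducing $AA^\top$ modulo $p$ (using $p\mid t$) kills the $t(I+J)$ term, leaving the explicit block matrix; its nonzero rows span a subspace of $\mathrm{rowsp}_p(A)$, and similarly the reduction of $A^\top A$ supplies vectors in $\mathrm{rowsp}_p(A^\top)$. By inspecting which block-indicator vectors appear in both reductions (exactly the move carried out for $A+I$ in Lemma~\ref{lem:prank}, but now the constant-diagonal shift is absent), I would identify the common vectors and read off $d$.

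For the upper bound $\rank_p(A)\leqslant 2t+2$ I would use Sylvester's rank inequality on the product $AA^\top$, exactly as in the final paragraph of Lemma~\ref{lem:prank}: combining $2\rank_p(A)\leqslant\rank_p(AA^\top)+(4t+1)$ with a bound on $\rank_p(AA^\top)$ obtained by reducing \eqref{eq:ew1} modulo $p$. Here the relevant input is that $AA^\top\bmod p$ equals the explicit block matrix in \eqref{eq:ew1} with the $t(I+J)$ term deleted, whose rank can be bounded directly (each block is a scaled all-ones matrix, so the rank is small). The main obstacle I anticipate is the bookkeeping in the lower bound: I must produce enough independent common vectors to push $d$ up to the exact value needed for $2t+2$ rather than the weaker $2t+1$, and this depends delicately on the parameter $a$ and on the sizes of the four blocks; matching the lower and upper bounds so that both equal $2t+2$ is the crux, and it is where Theorem~\ref{thm:quadforma} (the constraint on $a$) may be needed to rule out degenerate cases.
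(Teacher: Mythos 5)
Your skeleton is the same as the paper's: Sylvester's rank inequality applied to $AA^\top$ for the upper bound $\rank_p(A)\leqslant 2t+2$, and the row-space-intersection inequality for the lower bound. But the crux of the lower bound is missing. Everything reduces to showing $d\geqslant 3$, and you never produce the required vectors; you only say you "would" inspect the block-indicator vectors, and you even suggest the outcome may depend delicately on $a$ and require Theorem~\ref{thm:quadforma}. The paper closes exactly this point by exhibiting the explicit three-dimensional space
\[
W=\mathrm{span}\{(\mathbf{1}^\top_t,\mathbf{1}^\top_t,\mathbf{1}^\top_{a},\mathbf{1}^\top_{2t+1-a}),\ (\mathbf{1}^\top_t,-\mathbf{1}^\top_t,\mathbf{0}^\top_{a},\mathbf{0}^\top_{2t+1-a}),\ (\mathbf{0}^\top_t,\mathbf{0}^\top_t,\mathbf{1}^\top_{a},-\mathbf{1}^\top_{2t+1-a})\}
\]
and verifying $W\subseteq \mathrm{rowsp}_p(A)\cap\mathrm{rowsp}_p(A^\top)$ directly from the mod-$p$ reductions of \eqref{eq:ew1} and \eqref{eq:ew2}; no case analysis on $a$ and no appeal to Theorem~\ref{thm:quadforma} is needed. (Note also that your containments are transposed: rows of $AA^\top$ are linear combinations of rows of $A^\top$, so the reduction of \eqref{eq:ew1} supplies vectors of $\mathrm{rowsp}_p(A^\top)$, while the reduction of \eqref{eq:ew2} supplies vectors of $\mathrm{rowsp}_p(A)$ --- the opposite of what you wrote. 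The swap is harmless for the intersection, but it matters once one actually writes out the verification.)

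There is also a concrete error in your lower-bound inequality. You claim $J-I$ is invertible mod $p$ with $\rank_p(J-I)=4t+1$ because ``its eigenvalues are $4t=-1$ and $-1$''; here you reduced $4t$ modulo $4t+1$ rather than modulo $p$. Since $p\mid t$ we have $4t\equiv 0\pmod p$, so $(J-I)\mathbf{1}=4t\,\mathbf{1}\equiv\mathbf{0}\pmod p$: the matrix $J-I$ is singular mod $p$ and $\rank_p(J-I)=4t$, which is the value the paper uses. The slip is recoverable --- with the correct value, $d\geqslant 3$ still gives $\rank_p(A)\geqslant 2t+d/2\geqslant 2t+3/2$, hence $\rank_p(A)\geqslant 2t+2$ by integrality --- but as written your inequality $2t+\tfrac{1+d}{2}\leqslant\rank_p(A)$ is false, and it is what made weaker values of $d$ look sufficient. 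The upper-bound half of your plan is sound as sketched: the mod-$p$ reduction of \eqref{eq:ew1} has rank at most $3$ (its four block-row patterns satisfy one linear relation), and even the crude bound $4$ would do, since $2\rank_p(A)\leqslant 4+(4t+1)$ already forces $\rank_p(A)\leqslant 2t+2$ because $\rank_p(A)$ is an integer.
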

\begin{proof}
First we show that $\rank_p(A) \leqslant 2t+2$. 
Combine \eqref{eq:ew1} and Sylvester's rank inequality \cite[0.4.5 Rank inequalities (c)]{HJ} to obtain
\begin{align*}
    2\rank_p(A)&=\rank_p(A)+\rank_p(A^\top)\\
    &\leqslant \rank_p(AA^\top)+4t+1\\
    &=4t+4.
\end{align*}
Thus $\rank_p(A)\leqslant 2t+2$. 

Next we show that $\rank_p(A) \geqslant 2t+2$.
Let $d = \dim( \mathrm{rowsp}_p(A)\cap \mathrm{rowsp}_p(A^\top) )$ be the dimension of the intersection of row spaces.
Then
\begin{align*}
4t&=\rank_p(J-I)=\rank_p(A+A^\top)\\
&\leqslant \rank_p(A)+\rank_p(A^\top)-d\\
&=2\rank_p(A)-d, 
\end{align*}
that is, we have that $2t+d/2\leqslant \rank_p(A)$. 
Hence it suffices to show that $d\geqslant 3$.

The equations \eqref{eq:ew1} and \eqref{eq:ew2} together with the assumption that $p$ divides $t$ yields 
\begin{align*}
W \subseteq \mathrm{rowsp}_p(A) \text{ and } W \subseteq \mathrm{rowsp}_p(A^\top), 
\end{align*}
where 
\begin{align*}
W=\mathrm{span}\{({\bf 1}^\top_t,{\bf 1}^\top_t,{\bf 1}^\top_{a},{\bf 1}^\top_{2t+1-a}), ({\bf 1}^\top_t,-{\bf 1}^\top_t,{\bf 0}^\top_{a},{\bf 0}^\top_{2t+1-a}),({\bf 0}^\top_t,{\bf 0}^\top_t,{\bf 1}^\top_{a},-{\bf 1}^\top_{2t+1-a})\}. 
\end{align*}
Therefore 
\begin{align*}
W \subseteq \mathrm{rowsp}_p(A)\cap \mathrm{rowsp}_p(A^\top), 
\end{align*}
and thus $d \geqslant 3$.
\end{proof}

\begin{lemma}\label{lem:a2a}
	Let $A$ be an EW tournament matrix of order $4t+1$ and let $l_1,\dots,l_{4t+1}$ be the invariant factors of $A^2+A$.
Then $l_{4t} = t$ and $l_{4t+1} = t^2(16t^2-1)$. 
\end{lemma}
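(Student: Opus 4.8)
The plan is to read off the two largest invariant factors from an explicit inverse of $M:=A^2+A$, using the minor--adjugate dictionary of Lemma~\ref{lem:detminor} and Lemma~\ref{lem:minor}, in the same spirit as Section~\ref{sec:4t2} and Theorem~\ref{thm:a1}. I first record the determinant: since $M=A(A+I)$, Lemma~\ref{lem:snfa1} and Lemma~\ref{eq:bm} give $\det(M)=\det(A)\det(A+I)=t^{2t}(4t-1)\cdot t^{2t}(4t+1)=t^{4t}(16t^2-1)$. Hence $l_1\cdots l_{4t+1}=t^{4t}(16t^2-1)$, and by Lemma~\ref{lem:minor} it suffices to compute $d_{4t}(M)$ and $d_{4t-1}(M)$, since $l_{4t+1}=\det(M)/d_{4t}(M)$ and $l_{4t}l_{4t+1}=\det(M)/d_{4t-1}(M)$.

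The crucial structural observation is that $M$ is a rank-$4$ perturbation of a scalar matrix. Using $A^\top=J-I-A$ one has $M=A^2+A=AJ-AA^\top$; reading the row sums of $A$ off the diagonal of \eqref{eq:ew1} and substituting the block form of $AA^\top$ there, I obtain $M=-tI_{4t+1}+PCP^\top$, where $P$ is the $(4t+1)\times4$ indicator matrix of the four index blocks of sizes $t,t,a,2t+1-a$ and $C$ is an explicit $4\times4$ integer matrix (the parameter $a$ enters only through $D:=P^\top P=\operatorname{diag}(t,t,a,2t+1-a)$). The Woodbury identity then gives $M^{-1}=-\tfrac1t I-\tfrac1{t^2}PCEP^\top$ with $E:=(I_4-\tfrac1t DC)^{-1}$, so each entry of $M^{-1}$ equals $-\tfrac1t$ on the diagonal plus a block-constant rational determined by the $4\times4$ matrix $CE$. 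In particular $M^{-1}$ takes only boundedly many distinct values, and its denominators are governed by $\det(I_4-\tfrac1t DC)$, which the matrix determinant lemma ties to $\det(M)=-t^{4t-3}\det(tI_4-DC)$, so that $\det(tI_4-DC)=-t^3(16t^2-1)$.

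From here the invariant factors are extracted by gcd computations. Clearing denominators, $\operatorname{adj}(M)=\det(M)M^{-1}$ is an explicit integer matrix whose entries are determined by $CE$; by Lemma~\ref{lem:detminor} with $|I'|=|J'|=1$ the gcd of these entries is $d_{4t}(M)$, whence $l_{4t+1}=\det(M)/d_{4t}(M)$. Repeating with $|I'|=|J'|=2$, that is, taking the gcd of the $2\times2$ minors of $M^{-1}$, produces $d_{4t-1}(M)$ and hence $l_{4t}l_{4t+1}$, from which $l_{4t}$ follows. As a guide and partial check, Lemma~\ref{lem:snfprod} applied to the factorisation $M=(A+I)A$, together with the invariant factors $1^{2t+1},t^{2t-1},t(4t+1)$ of $A+I$ (read off from Theorem~\ref{thm:main} via Lemma~\ref{eq:bm}), already shows $t\mid l_{4t}$ and $t(4t+1)\mid l_{4t+1}$, consistent with the claimed $l_{4t}=t$ and $l_{4t+1}=t^2(16t^2-1)$.

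I expect the gcd bookkeeping of the last step to be the main obstacle, and it is here that the constraint on $a$ is indispensable. The entries of $CE$ carry $a$-dependence through $D$, yet the final Smith normal form cannot depend on $a$; eliminating it requires the quadratic $a^2-(2t+1)a+t(t-1)=0$ of Theorem~\ref{thm:quadforma} (and, where parity is needed, that $\sqrt{8t+1}$ is an odd integer by Lemma~\ref{lem:o}). Concretely I must verify that the factor $16t^2-1$ occurs in the denominator of some entry of $M^{-1}$, so that it lands entirely in $l_{4t+1}$, while the power of $t$ in $d_{4t}(M)$ works out to $t^{4t-2}$ and that in $d_{4t-1}(M)$ to $t^{4t-3}$. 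The most delicate case is the $2\times2$-minor computation governing $l_{4t}$, where the diagonal correction $-\tfrac1t$ interacts with the block-constant part of $M^{-1}$ and the powers of $t$ must be tracked carefully to pin $l_{4t}$ to exactly $t$ rather than a higher power.
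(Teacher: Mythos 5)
Write $M=A^2+A$. Your framework is sound, and it is in fact the same one the paper uses: compute $M^{-1}$ explicitly, read $d_{4t}(M)$ off the entries of the adjugate (Lemma~\ref{lem:detminor} with $|I'|=|J'|=1$), read $d_{4t-1}(M)$ off the $2\times 2$ minors of $M^{-1}$, and finish with Lemma~\ref{lem:minor}. Your preliminary steps are all correct: $\det(M)=t^{4t}(16t^2-1)$; the decomposition $M=-tI+PCP^\top$ (valid because $AJ$, $tJ$ and the correction term in \eqref{eq:ew1} are constant on the four blocks, so $M=AJ-AA^\top$ is a diagonal shift plus a block-constant matrix); the Woodbury and matrix-determinant-lemma identities; and the non-circular use of Theorem~\ref{thm:main} (which does not depend on this lemma) with Lemma~\ref{lem:snfprod} to get $t\mid l_{4t}$ and $t(4t+1)\mid l_{4t+1}$.

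However, the argument stops exactly where the lemma's actual content begins. Everything hinges on two gcd identities, which in the paper's normalisation read $d_1\bigl(t^2(16t^2-1)M^{-1}\bigr)=1$ and $d_2\bigl(t^2(16t^2-1)M^{-1}\bigr)=t(16t^2-1)$, and you establish neither: phrases such as ``I must verify that the factor $16t^2-1$ occurs in the denominator of some entry of $M^{-1}$'' and ``the powers of $t$ must be tracked carefully'' state what remains to be done rather than do it. Note also that without computing $CE$ explicitly you have not even shown that $t^2(16t^2-1)M^{-1}$ is an integer matrix, which is needed before gcds of its entries make sense. The paper's proof consists precisely of this deferred work: it writes $t^2(16t^2-1)M^{-1}$ as $-t(16t^2-1)I$ plus an explicit block-constant matrix with entries $\alpha_\pm,\beta_\pm,\gamma_\pm$ expressed via $s=\sqrt{8t+1}$ (this is where Theorem~\ref{thm:quadforma}, i.e.\ $a=\bigl((2t+1)\pm s\bigr)/2$, and the oddness of $s$ from Lemma~\ref{lem:o} enter), then observes that the coprime integers $t$ and $5t+1$ both occur as entries, giving $d_1=1$, and finally enumerates all possible $2\times 2$ minors and checks that, after factoring out $t(16t^2-1)$, their gcd is $1$. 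Until you carry out the analogous computation with your $C$, $D$ and $E=(I_4-\tfrac1t DC)^{-1}$, what you have actually proved is only the divisibility $t\mid l_{4t}$ and $t(4t+1)\mid l_{4t+1}$, not the equalities $l_{4t}=t$ and $l_{4t+1}=t^2(16t^2-1)$.
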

\begin{proof}
	Define $\alpha_\pm := t (8 t+3\pm 2 s)+(1\pm s)/2$,  $\beta_\pm := (2 t+1) (2 t+(1\pm s)/2)$, and $\gamma_\pm := t(4 t+1\pm s)$.
	It is straightforward to deduce that the matrix $t^{2}(16t^2-1)(A^2+A)^{-1}$ is equal to
	\begin{align*}
	-t(16t^2-1)I_{4t+1}+\left(
	\begin{array}{cccc}
	 t (8 t+1)J_t & tJ & \gamma_+ J & \gamma_- J \\
	 (5 t+1)J & t (8 t+1)J_t & \beta_+ J & \beta_- J \\
	 \beta_+ J & \gamma_+ J & \alpha_+ J_{a} & tJ \\
	 \beta_- J & \gamma_- J & tJ &  \alpha_-J_{2t+1-a} \\
	\end{array}
	\right),
	\end{align*}
	where $s=\sqrt{8t+1}$.
Hence, to prove the lemma, it suffices to show that 
\begin{align*}
d_1(t^2(16t^2-1)(A^2+A)^{-1})&=1, \\
d_2(t^2(16t^2-1)(A^2+A)^{-1})&=t(16t^2-1). 
\end{align*}
It is easy to see that $t$ and $5t+1$ are (coprime) entries of $t^2(16t^2-1)(A^2+A)^{-1}$, belonging to $(1,2)$-block and $(2,1)$-block. 
Thus $d_1(t^2(16t^2-1)(A^2+A)^{-1})=1$. 
 
All the possible $2\times 2$ minors of $t^2(16t^2-1)(A^2+A)^{-1}$ have the form $\pm t(16t^2-1) \alpha$ where $\alpha$ is an element of the set $S$ given by
\begin{align*}
S= &\left\{0,t,t+1,2 t,2 t+1,4 t+1,5 t+1,t (8 t+1),8 t^2-3 t-1,t \left(16 t^2-16 t-3\right),\right .\\
&16 t^3-16 t^2+t+1,2 t (4 t\pm s),t (4 t+1\pm s),t (4 t-1\pm s),\\
&2 t \left(8 t^2-8 t-1\pm s\right),16 t^3-16 t^2-7 t-1\pm s(4 t+1),\\
&\frac{2 t+1\pm s}{2},\frac{2 t-1\pm s}{2},\frac{4 t+1\pm s}{2},\frac{6 t+1\pm s}{2},\frac{12 t+1\pm s}{2},\\
&\frac{(2 t-1) (4 t+1\pm s)}{2},\frac{(2 t+1) (4 t-1\pm s)}{2},\frac{(2 t+1)(4 t+1\pm s)}{2},\\
&\left . \frac{16 t^2-4 t-1\pm s}{2},\frac{16 t^2+6 t+1\pm s(4 t+1)}{2},\frac{16 t^2-2 t-1\pm s(4 t+1)}{2}\right\}. 
\end{align*}

Note that the greatest common divisor of the numbers in the set $S$ is $1$. 
Therefore $d_{4t}(t^2(16t^2-1)(A^2+A)^{-1})=t(16t^2-1)$. 
\end{proof}

We are now ready to prove Theorem~\ref{thm:snfa}.

\begin{proof}[Proof of Theorem~\ref{thm:snfa}]
Let $A$ be an EW tournament matrix and let $a_1,\ldots,a_{4t+1}$ be the invariant factors of $A$.   

\paragraph{Claim 1:} $a_{2t+2}=1$. 
Assume that there exists a prime $p$ dividing $a_{2t+2}$. It follows from Lemma~\ref{lem:minor} and Lemma~\ref{lem:snfa1} that $a_1\cdots a_{4t+1}=\det(A)=t^{2t}(4t-1)$. 
Thus $p$ is either 
\begin{enumerate}[(1)]
\item a divisor of $t$, or 
\item not a divisor of $t$ and a divisor of $4t+1$. 
\end{enumerate} 
For the case (1), Lemma~\ref{lem:snfa2} contradicts the fact that 
\begin{align*}
\rank_p(A)=\max\{i\mid \text{$p$ does not divide $a_i$}\}\leqslant 2t+1.
\end{align*} 
For the case (2), observe that $p$ is odd and hence $p \geqslant 3$.
Furthermore $p$ is a divisor of $a_i$ for $i=2t+3,\ldots,4t+1$. 
Thus $p^{2t}$ divides $4t-1$.
But this is impossible since $p^{2t}>4t-1$ for all $t\geqslant 1$.  
Therefore we have $a_{2t+2}=1$. 
This proves Claim 1. 

\paragraph{Claim 2:} $a_{4t+1}$ divides $t^2(4t-1)$. 
By Lemma~\ref{lem:snfprod}, and Lemma~\ref{lem:a2a}, we have that $a_{4t+1}$ divides $l_{4t+1}=t^{2}(16t^2-1)$. 
On the other hand $a_{4t+1}$ divides $\det(A)=t^{2t}(4t-1)$. 
Therefore $a_{4t+1}$ divides $t^2(4t-1)$.  
This proves Claim~2. 

\paragraph{Claim 3:} $a_{4t}$ divides $t$. 
By Lemma~\ref{lem:snfprod}, and Lemma~\ref{lem:a2a}, we have that $a_{4t}$ divides $l_{4t}=t$. Therefore $a_{4t}$ divides $t$.  
This proves Claim~3. 

Now, by Lemma~\ref{lem:snfa1}, we have
\begin{align}\label{eq:cm}
a_{2t+3}\cdots a_{4t+1}=t^{2t}(4t-1). 
\end{align}
Since $a_i$ divides $a_{i+1}$ for each $i$ and $a_{4t}\leqslant t$, by Claim~3, 
we have that $a_{i}\leqslant t$ for $i=2t+3,\dots,4t-1$, from which with $a_{4t+1}\leqslant t^2(4t-1)$ it follows that 
\begin{align}\label{eq:cm1}
a_{2t+3}\cdots a_{4t+1}\leqslant t^{2t}(4t-1).
\end{align}
By \eqref{eq:cm}, equality must hold in \eqref{eq:cm1}. 
Therefore $a_{2t+3}=\dots=a_{4t}=t$ and $a_{4t+1}=t^2(4t-1)$.    
\end{proof}

\section*{Acknowledgements} 
\label{sec:acknowledgements}

We thank the referees for their careful reading of the manuscript and their helpful comments.



\end{document}